\numberwithin{equation}{section}
\renewcommand\d{\partial}
\newcommand\R{\mathbb R}
\def\O{\Omega}
\def\epsilon{\varepsilon}
\def\e{\varepsilon}
\newcommand\br{\begin{rem}}
\newcommand\er{\end{rem}}
\newcommand\bp{\begin{pmatrix}}
\newcommand\ep{\end{pmatrix}}
\newcommand\be{\begin{equation}}
\newcommand\ee{\end{equation}}
\newcommand\ba{\begin{equation}\begin{aligned}}
\newcommand\ea{\end{aligned}\end{equation}}
\newcommand\nn{\nonumber}
\newcommand{\uu}{{\mathbf u}}
\newcommand{\ff}{{\mathbf f}}
\newcommand{\dive}{{\rm div\,}}
\newtheorem{defi}{Definition}[section]
\newtheorem{theorem}[defi]{Theorem}
\newtheorem{proposition}[defi]{Proposition}
\newtheorem{lemma}[defi]{Lemma}
\newtheorem{corollary}[defi]{Corollary}
\newtheorem{remark}[defi]{Remark}
\numberwithin{equation}{section}
\begin{document}

\title{Homogenization of some evolutionary non-Newtonian flows in porous media}

\author{Yong Lu \footnote{Department of Mathematics, Nanjing University, Nanjing 210093, China, luyong@nju.edu.cn}\and Zhengmao Qian \footnote{Department of Mathematics, Nanjing University, Nanjing 210093, China, zmqian@smail.nju.edu.cn}}
\date{}

\maketitle

\begin{abstract}

In this paper, we consider the homogenization of evolutionary incompressible purely viscous non-Newtonian flows of Carreau-Yasuda type in porous media with small perforation parameter $0<\e\ll 1$, where the small holes are periodically distributed. Darcy's law is recovered in the homogenization limit.  Applying Poincar\'e type inequality in porous media allows us to derive the uniform estimates on velocity field, of which the gradient  is small of size $\e$ in $L^{2}$ space. This indicates the nonlinear part in the viscosity coefficient does not contribute in the limit and a linear model (Darcy's law) is obtained. The estimates of the pressure rely on a proper extension from the perforated domain to the homogeneous non-perforated domain. By integrating the equations in time variable such that each term in the resulting equations has certain continuity in time, we can establish the extension of the pressure by applying the dual formula with the restriction operator.

\end{abstract}

\renewcommand{\refname}{References}


\section{Introduction}

In this paper we consider the homogenization of evolutionary incompressible viscous non-Newtonian flows in porous media. Non-Newtonian fluids are extensively involved in a number of applied problems involving the production of oil and gas from underground reservoirs. There are at least two typical situations: the flow of heavy oils and enhanced oil recovery. Therefore, it is important to establish filtration laws governing non-Newtonian flows through porous media. In this paper, we will consider only quasi-Newtonian fluids where the viscosity can be expressed as a function of the shear rate. In particular, we focus on the  Carreau-Yasuda model in space-time cylinder $(0,T)\times\Omega_\varepsilon$:
\begin{equation}\label{1.3}
\begin{cases}
\varepsilon^2\partial_t\mathbf{u}_\varepsilon-{\rm div}\,\big(\eta_r(D\mathbf{u}_\varepsilon)D\mathbf{u}_\varepsilon \big)+(\mathbf{u}_\varepsilon\cdot\nabla)\mathbf{u}_\varepsilon
+\nabla p_\varepsilon=\mathbf{f},& {\rm in}\;(0,T)\times\Omega_\varepsilon, \\
{\rm div} \,\mathbf{u}_\varepsilon=0,& {\rm in}\;(0,T)\times\Omega_\varepsilon,\\
\mathbf{u}_\varepsilon=0,& {\rm in}\;(0,T)\times\partial\Omega_\varepsilon,\\
\mathbf{u}_\varepsilon|_{t=0}=\mathbf{u}_0,&{\rm in}\;\Omega_\varepsilon.
\end{cases}
\end{equation}
Here $\mathbf{u}_\varepsilon$ is the velocity, $\nabla \mathbf{u}_\varepsilon$ is the gradient velocity tensor, $D\mathbf{u}_\varepsilon=\frac{1}{2}(\nabla \mathbf{u}_\varepsilon+\nabla^T\mathbf{u}_\varepsilon)$ denotes the rate-of-strain tensor, $p_\varepsilon$ denotes the pressure, $\mathbf{f}$ is the density of the external force and $\uu_{0}$ is the initial velocity. In this paper, we assume $\mathbf{f}$ and $\uu_{0}$ are independent of $\e$ and are both in $L^{2}((0,T)\times \Omega;\R^{3})$. While, our main results still hold if  $\mathbf{f}$ and $\uu_{0}$ depend on $\e$ and converge strongly in $L^{2}((0,T)\times \Omega;\R^{3})$. The stress tensor $\eta_r(D\mathbf{u}_\varepsilon)$ is determined by the Carreau-Yasuda law:
$$\eta_r(D\mathbf{u}_\varepsilon)=(\eta_0-\eta_\infty)(1+\lambda| D\mathbf{u}_\varepsilon|^2)^{\frac{r}{2}-1}+\eta_\infty,\quad \eta_0\geq\eta_\infty, \ \lambda>0, \  r>1, $$
where $\eta_0$ is the zero-shear-rate viscosity, $\lambda$ is a time constant, $(r-1)$ is a dimensionless constant describing the slope in the {\em power law region} of log $\eta_r$
versus log $(|D(\uu_{\e})|)$.

The perforated domain $\Omega_{\e}$ under consideration is described as follows. Let $\Omega$ be a bounded domain of class $C^{2,\mu}, 0<\mu<1$. The holes in $\Omega$ are denoted by $T_{\varepsilon,k}$ which are assumed to satisfy
\begin{equation*}
 T_{\varepsilon,k}=\varepsilon x_k+\varepsilon T\subset\subset    \varepsilon Q_k,
\end{equation*}
where the cube $Q_k=(-\frac{1}{2},\frac{1}{2})^3+k$ and $x_k=x_0+k$ with $x_0\in(-\frac{1}{2},\frac{1}{2})^3,\; k\in {\mathbb{Z}}^3$; $T$ is a model hole which is assumed to be a closed domain  contained in $Q_0$ with $C^{2,\mu}$ boundary. The perforation parameter $\varepsilon$ is used to measure the mutual distance and size of the holes, and $\varepsilon x_k=\varepsilon x_0+\varepsilon k$ are the locations of the holes.

The perforated domain $\Omega_\varepsilon$ is then defined as:
\begin{equation}\label{1.2}
  \Omega_\varepsilon=\Omega\backslash\bigcup_{k\in K_\varepsilon}T_{\varepsilon,k},\quad {\rm where} \ K_\varepsilon=\{k\in {\mathbb{Z}}^3:\varepsilon\overline{Q}_{k}\subset\Omega\}.
\end{equation}

The study of homogenization problems in fluid mechanics have gained a lot interest. In particular, the homogenization of Stokes system in perforated domains has been systematically studied. In \cite{tartar}, Tartar considered the case where the size of the holes is proportional to the mutual distance of the holes and Darcy's law was derived.
Then Allaire \cite{ALLaire1,ALLaire2} considered general cases and showed that the homogenized equation are determined by the ratio $\sigma_\varepsilon$ between the size and the mutual distance of the holes:
$$\sigma_\varepsilon=\big(\frac{\varepsilon^d}{a_\varepsilon^{d-2}}\big)^\frac{1}{2},\quad d\geq3;\qquad\sigma_\varepsilon=\varepsilon\big|{\rm log}\frac{a_\varepsilon}{\varepsilon}\big|^\frac{1}{2},\quad d=2,$$
where $\varepsilon$ and $a_\varepsilon$ are used to measure the mutual distance of holes and the size of holes. Particularly, if $\displaystyle\lim_{\varepsilon\rightarrow 0}\sigma_\varepsilon=0$ corresponding to the case of large holes, the homogenized system is the Darcy's law; if $\displaystyle\lim_{\varepsilon\rightarrow 0}\sigma_\varepsilon=\infty$ corresponding to the case of small holes, there arise the same Stokes equations in homogeneous domains; if $\displaystyle\lim_{\varepsilon\rightarrow 0}\sigma_\varepsilon=\sigma_\ast\in (0,+\infty)$ corresponding to the case of critical size of holes, the homogenized equations are governed by the Brinkman's law--a combination of the Darcy's law and the original Stokes equations. Same results were shown in \cite{Yong} by employing a generalized cell problem inspired by Tartar \cite{tartar}.

Later, the homogenization study is extended to more complicated models describing fluid flows: Mikeli{\'c} \cite{Mik} for the nonstationary incompressible Navier-Stokes equations, Masmoudi \cite{masmoudi} for the compressible Navier-Stokes equations, Feireisl, Novotn{\'y} and Takahashi \cite{takahash} for the complete Navier-Stokes-Fourier equations. In all these studies, only the case where the size of holes is proportional to the mutual distance of the holes is considered and the Darcy's law is recovered in the limit.

Recently, cases with different sizes of holes are studied. Feireisl, Namlyeyeva and Ne{\v c}asov{\'a} \cite{FeNaNe} studied the case with critical size of holes for the incompressible Navier-Stokes equations and they derived Brinkman's law;  Yang and the first author \cite{luyong} studied the homogenization of evolutionary incompressible Navier-Stokes system with large and small size of holes. In \cite{ALL-NS1, ALL-NS6, ALLN}, with collaborators the first author considered the case of small holes for the compressible Navier-Stokes equations and it is shown that the homogenized equations remain the same as the original ones.  Oschmann and Pokorn{\' y} \cite{pokorn} also considered the case of small holes for the unsteady compressible Navier-Stokes equations for adiabatic exponent $\gamma> 3$ which improved the condition $\gamma> 6$ of \cite{ALLN}, and they showed that the homogenized equations keep unchanged. Bella and Oschmann \cite{oschmann} considered the homogenization of compressible Navier-Stokes equations for the case with randomly perforated domains with small size of holes and they get the same limiting equation. H{\"o}fer, Kowalczyk and Schwarzacher \cite{Hfer} studied the case of large holes for the compressible Navier-Stokes equations at low Mach number and derived the Darcy's law; Bella and Oschmann \cite{bella} also studied the case with critical size of holes for the compressible Navier-Stokes equations at low Mach number and they derived incompressible Navier-Stokes equations with Brinkman term. Bella, Feireisl and Oschmann \cite{feireisl}  considered the case of unsteady compressible Navier-Stokes equations at low Mach number under the assumption $\O_{\e}\to \O$ in  sense of Mosco's convergence and they derived the incompressible Navier-Stokes equations. Oschmann and Ne{\v c}asov{\'a} \cite{neasov} studied homogenization of the two-dimensional evolutionary compressible Navier-Stokes equations with very small holes and limiting equations remain unchanged.

 There are not many mathematical studies concerning the homogenization of non-Newtonian flows. Mikeli\'{c} and Bourgeat \cite{FL1} considered stationary case of Carreau-Yasuda type flows under the assumption $a_{\e}\sim \e$ and derived Darcy's law. Mikeli\'{c} summarized some theory of stationary non-Newtonian flows in Chapter 4 of \cite{hornung}. While for evolutionary non-Newtonian fluid equations, according to the authors' knowledge, there is no rigorous mathematical analysis results.  In this paper, we justify in porous media setting, the evolutionary Carreau-Yasuda model converges to Darcy's law.

\subsection{Notations and weak solutions}
We recall some notations of Sobolev spaces. Let $1\leq r\leq \infty$ and $\Omega$ be a bounded domain. We use the notation $L_0^r(\Omega)$ to denote the space of $L^r(\Omega)$ functions with zero mean value:
$$L_0^r(\Omega)=\Big\{f\in L^r(\Omega) \ : \ \int_\Omega f\, {\rm d}x=0\Big\}.$$
We use $W^{1,r}(\Omega)$ to denote classical Sobolev space, and $W^{1,r}_{0}(\Omega)$ denotes the completion of $C_c^\infty(\Omega)$ in $W^{1,r}(\Omega)$. Here $C_c^\infty(\Omega)$ is the space of smooth functions compactly supported in $\Omega$. We use $W^{-1,r'}(\Omega)$ to denote the dual space of $W^{1,r}_{0}(\Omega)$. For $1\leq r<\infty$, $W^{1,r}(\mathbb{R}^3)=W^{1,r}_0(\mathbb{R}^3)$. We introduce the functional space $W_{0,\rm div}^{1,r}(\Omega),  \ 1\leq r \leq \infty$ by
$$W_{0,\rm div}^{1,r}(\Omega)=\left\{u\in W_0^{1,r}(\Omega;\mathbb{R}^3): \ {\rm div}\,u=0 \ {\rm in} \, \Omega\right\}.$$

Now we introduce the definition of finite energy weak solutions to \eqref{1.3}:
\begin{defi}\label{def-weak}
Let $T>0$. We say that $\mathbf{u}_{\varepsilon}$ is a finite energy weak solution of \eqref{1.3} in $(0,T)\times \Omega_{\e}$ provided
\begin{itemize}
\item $\mathbf{u}_{\varepsilon}\in C_{\rm weak}([0,T); L^2(\Omega_\varepsilon;\R ^3))\cap L^2(0,T; W_{0, \rm div}^{1,2}(\Omega_\varepsilon))\cap L^r(0,T; W_{0, \rm div}^{1,r}(\Omega_\varepsilon))$.

\item The integral identity
\ba
&\int_0^T\int_{\Omega_{\varepsilon}}-\varepsilon^2\mathbf{u}_{\varepsilon}\cdot \partial_t\varphi-\mathbf{u}_{\e}\otimes \mathbf{u}_{\e}:\nabla\varphi+\eta_r(D\mathbf{u}_\varepsilon)D\mathbf{u}_{\varepsilon}:D\varphi\, {\rm d}x{\rm d}t\\
&=\int_0^T\int_{\Omega_\varepsilon}\mathbf{f}\cdot\varphi \, {\rm d}x{\rm d}t+\varepsilon^2\int_{\Omega_\varepsilon}\mathbf{u}_0\cdot\varphi(0,\cdot)\, {\rm d}x
\nn\ea
holds for any test function\;$\varphi\in C_c^\infty([0,T)\times\Omega_\varepsilon;\mathbb{R}^3),\ {\rm div}_x\varphi=0$.

\item The energy inequality
\ba\label{energy-ineq}
\int_{\Omega_\varepsilon}\frac{\varepsilon^2}{2}\mathbf{u}_{\varepsilon}^2\,{\rm d}x+\int_0^t\int_{\Omega_\varepsilon}\eta_r(D\mathbf{u}_\varepsilon)| D\mathbf{u}_{\varepsilon}|^2\,{\rm d}x{\rm d}t\leq\int_{\Omega_\varepsilon}\frac{\varepsilon^2}{2}\mathbf{u}_0^2\,{\rm d}x+\int_0^t\int_{\Omega_\varepsilon}\mathbf{f}\cdot \mathbf{u}_{\varepsilon}\,{\rm d}x{\rm d}t
\ea
holds for a.a. $t\in(0,T)$.

\end{itemize}

\end{defi}

The classical theory from Ladyzhenskaya \cite{lad}, Theorem 1.1 in \cite{malek} and Theorem 1.3 in \cite{wolf} gives the existence of at least one weak solution $\mathbf{u}_\varepsilon\in C_{\rm weak}([0,T); L^2(\Omega_\varepsilon;\R ^3))\cap L^r(0,T; W_{0, \rm div}^{1,r}(\Omega_\varepsilon))$ for $r> 2$ and $\mathbf{u}_\varepsilon\in C_{\rm weak}([0,T); L^2(\Omega_\varepsilon;\R ^3))\cap L^2(0,T; W_{0, \rm div}^{1,2}(\Omega_\varepsilon))$ for $1<r\leq2$.

\medskip

For brevity we use $C$ to denote a constant independent of $\varepsilon$ throughout the paper, while the value of $C$ may differ from line to line.

\subsection{Restriction, extension, and some useful lemmas}
Our goal is to obtain the limit system in homogeneous domains without holes, so we need to extend $(\mathbf{u}_\varepsilon,p_\varepsilon)$ to the whole of $\Omega$. Due to the zero boundary conditions on $\uu_{\e}$, it is nature to extend $\mathbf{u}_\varepsilon$ by zero to the holes.  However, the extension of the pressure is more delicate. It is defined by the restriction operator due to Tartar \cite{tartar} for the case where the size of the holes is proportional to their mutual distance and extended to general sizes of holes by Allaire \cite{ALLaire1,ALLaire2}. For $\Omega_\varepsilon$ defined in (\ref{1.2}), there exists a linear operator, named restriction operator, $R_\varepsilon:W_0^{1,q}(\Omega;{\mathbb{R}}^3)\rightarrow W_0^{1,q}(\Omega_\varepsilon;{\mathbb{R}}^3)$ $(1<q<\infty)$ such that:
\ba\label{def-restriction}
&u\in W_0^{1,q}(\Omega_\varepsilon;{\mathbb{R}}^3)\Longrightarrow R_\varepsilon(\tilde{u})=u\ {\rm in}\ \Omega_\varepsilon, \ {\rm where} \ \tilde{u} \ {\rm is \ the \ zero \ extension \ of} \ u,\\
&{\rm div}\,u=0 {\rm \ in} \ \Omega\Longrightarrow {\rm div}\,R_\varepsilon (u)=0 \ \rm in \ \Omega_\varepsilon,\\
 & \|\nabla R_\varepsilon(u)\|_{L^q(\Omega_\varepsilon)}\leq C(\varepsilon^{-1}\|u\|_{L^q(\Omega)}+\|\nabla u\|_{L^q(\Omega)}).
\ea
The construction of such a restriction operator can be found in \cite{Mik}.  Later Allaire \cite{ALLaire1,ALLaire2} constructed such type of restriction operators for general sizes of holes in $L^{2}$ framework. Recently, following the construction of Allaire, the first author \cite{Y. Lu} gave a construction of restriction operators  for general sizes of holes in general $L^{q}$ framework.

The extension $\tilde{p}_\varepsilon$ of the pressure $p_\varepsilon$ with $\nabla p_{\e} \in W^{-1,q'}(\Omega_{\e}; \R^{3})$ is then defined through the following dual formulation:
\begin{equation*}
  \langle\nabla\tilde{p}_\varepsilon,\varphi\rangle_{\Omega}=\langle\nabla p_\varepsilon,R_\varepsilon(\varphi)\rangle_{\Omega_\varepsilon},\qquad \forall \,\varphi\in W^{1,q}_{0}(\Omega;{\mathbb{R}}^3).
\end{equation*}
Such an extension $\tilde p_{\e}$ is well defined due to the three properties in \eqref{def-restriction} of the restriction operator.

 \medskip
Now we introduce several useful conclusions which will be frequently used throughout this paper.
Let's first introduce Poincar\'e inequality in porous media. One may find this proof in \cite{tartar,Mik}.
\begin{lemma}\label{lem-Poincare}
For each $u\in W_0^{1,q}(\Omega_\varepsilon;\mathbb{R}^3),\;1<q<\infty$, where $\Omega_{\varepsilon}$ is defined in (\ref{1.2}). Then there holds
\begin{equation}\label{1.7}
 \|u\|_{L^q(\Omega_\varepsilon)}\leq C\varepsilon\|\nabla u\|_{L^q(\Omega_\varepsilon)}.
\end{equation}

\end{lemma}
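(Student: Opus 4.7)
The plan is the standard scaling argument for Poincaré inequalities in perforated domains. Since $u$ vanishes on $\partial\Omega_\varepsilon$, in particular on each hole boundary $\partial T_{\varepsilon,k}$, I would first extend $u$ by zero across all holes and across $\partial\Omega$ to get a function $\tilde u\in W_0^{1,q}(\Omega;\mathbb{R}^3)$ that vanishes on every $T_{\varepsilon,k}$. The key point is to exploit this vanishing on the many small pieces $T_{\varepsilon,k}$, each of which is roughly an $\varepsilon$-sized obstacle inside the $\varepsilon$-sized cell $\varepsilon Q_k$.

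Next, I would work cell-by-cell on the interior cells, i.e.\ those indexed by $k\in K_\varepsilon$. Restricting $\tilde u$ to the cell $\varepsilon Q_k$, I introduce the rescaled function $v(y):=\tilde u(\varepsilon k+\varepsilon y)$ for $y\in Q_0$. Then $v$ vanishes on the rescaled hole $x_0+T\subset Q_0$, which is a fixed set of positive measure (and in fact positive $W^{1,q}$-capacity because $T$ is a $C^{2,\mu}$ domain). Hence the classical Poincaré inequality on the unit cell for functions vanishing on a fixed subset of positive capacity gives
\[
\|v\|_{L^q(Q_0)}\leq C\|\nabla v\|_{L^q(Q_0)},
\]
with $C$ depending only on $T$ and $Q_0$. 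A direct change of variables shows
\[
\|v\|_{L^q(Q_0)}^q=\varepsilon^{-3}\|u\|_{L^q(\varepsilon Q_k)}^q,\qquad \|\nabla v\|_{L^q(Q_0)}^q=\varepsilon^{q-3}\|\nabla u\|_{L^q(\varepsilon Q_k)}^q,
\]
so that $\|u\|_{L^q(\varepsilon Q_k)}\leq C\varepsilon\|\nabla u\|_{L^q(\varepsilon Q_k)}$. Raising to the $q$-th power and summing over $k\in K_\varepsilon$ yields the desired inequality on the interior part $\bigcup_{k\in K_\varepsilon}\varepsilon Q_k\cap\Omega_\varepsilon$.

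The remaining obstacle, and really the only nontrivial point, is handling the boundary strip $S_\varepsilon:=\Omega\setminus\bigcup_{k\in K_\varepsilon}\varepsilon Q_k$, whose cells are not entirely contained in $\Omega$ and therefore are not guaranteed to contain a whole hole. However, since $\Omega$ is $C^{2,\mu}$ and $K_\varepsilon$ consists of cubes $\varepsilon\overline{Q}_k\subset\Omega$, the strip $S_\varepsilon$ has width of order $\varepsilon$. Because $u\in W_0^{1,q}(\Omega_\varepsilon)$ vanishes on $\partial\Omega$, the standard Poincaré inequality on a tubular strip of thickness $O(\varepsilon)$ (obtained, e.g., by integrating $u$ along normals to $\partial\Omega$ and applying Hölder's inequality) gives $\|u\|_{L^q(S_\varepsilon)}\leq C\varepsilon\|\nabla u\|_{L^q(S_\varepsilon)}$ with the same scaling. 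Combining the interior estimate with the boundary-strip estimate concludes the proof. The main technical point to be careful about is therefore the boundary strip; inside $\Omega$ the $\varepsilon$-periodic presence of holes on which $u$ vanishes does all the work via a single rescaled Poincaré inequality on a fixed reference cell.
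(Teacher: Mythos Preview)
Your argument is correct and is precisely the standard proof; the paper does not reproduce it but simply refers to Tartar and Mikeli\'{c}, where exactly this cell-by-cell rescaling argument (Poincar\'e on the unit cell $Q_0$ for functions vanishing on the fixed model hole $x_0+T$, followed by scaling and summation over $k\in K_\varepsilon$) is carried out. Your treatment of the boundary strip $S_\varepsilon$ via the $O(\varepsilon)$-width Poincar\'e inequality near $\partial\Omega$ is also the usual way to close the estimate, so there is nothing to add.
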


\medskip

Next we introduce the restriction operator which concerns functions with time variable. For each $\varphi\in L^{p}(0,T;W_{0}^{1,q}(\Omega))$ with $1\leq p\leq\infty,\,1<q<\infty$, the restriction $R_\varepsilon(\varphi)$ is taken only on spatial variable:
\begin{equation}\label{res time def}
  R_\varepsilon(\varphi)(\cdot,t)=R_\varepsilon\big(\varphi(\cdot,t)\big)(\cdot),\quad \mbox{for\, each} \ t\in(0,T).
\end{equation}
Then we can get the same properties as in (\ref{def-restriction}) for each $t\in (0,T)$. Moreover, it is rather straightforward to deduce from \eqref{def-restriction} and \eqref{1.7} the following lemma:
\begin{lemma}
Let $\Omega$ be a bounded domain of class $C^1$ and $\Omega_{\e}$ be defined in \eqref{1.2}. Let $\varphi\in L^{p}(0,T;W_{0}^{1,q}(\Omega;\mathbb R^3)),\ 1\leq p\leq\infty,\;1<q<\infty$. Then we have
\begin{equation}\label{1.5}
\| R_\varepsilon (\varphi)\|_{L^{p}(0,T;L^{q}(\Omega_\varepsilon))}+\e \|\nabla R_\varepsilon (\varphi)\|_{L^{p}(0,T;L^{q}(\Omega_\varepsilon))}\leq C(\|\varphi\|_{L^{p}(0,T;L^{q}(\Omega))}+\e\|\nabla \varphi\|_{L^{p}(0,T;L^{q}(\Omega))}).
\end{equation}

\end{lemma}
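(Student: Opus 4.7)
\medskip
\noindent\textbf{Proof proposal.} The plan is to reduce the statement to a pointwise-in-time estimate and then integrate in $t$. Since $R_\varepsilon$ acts only on the spatial variable (see \eqref{res time def}), for a.e.\ $t\in(0,T)$ the function $R_\varepsilon(\varphi)(\cdot,t)=R_\varepsilon\bigl(\varphi(\cdot,t)\bigr)$ belongs to $W^{1,q}_0(\Omega_\varepsilon;\mathbb R^3)$, and the three properties of the restriction operator recorded in \eqref{def-restriction} apply slicewise. In particular, from the third line of \eqref{def-restriction} we have, for a.e.\ $t$,
\[
\|\nabla R_\varepsilon(\varphi)(\cdot,t)\|_{L^q(\Omega_\varepsilon)}\leq C\Bigl(\varepsilon^{-1}\|\varphi(\cdot,t)\|_{L^q(\Omega)}+\|\nabla\varphi(\cdot,t)\|_{L^q(\Omega)}\Bigr),
\]
and multiplying through by $\varepsilon$ gives the desired pointwise bound on the gradient term.

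For the $L^q(\Omega_\varepsilon)$ part of the norm, since $R_\varepsilon(\varphi)(\cdot,t)\in W^{1,q}_0(\Omega_\varepsilon;\mathbb R^3)$ the porous-media Poincar\'e inequality of Lemma \ref{lem-Poincare} yields
\[
\|R_\varepsilon(\varphi)(\cdot,t)\|_{L^q(\Omega_\varepsilon)}\leq C\varepsilon\|\nabla R_\varepsilon(\varphi)(\cdot,t)\|_{L^q(\Omega_\varepsilon)}\leq C\Bigl(\|\varphi(\cdot,t)\|_{L^q(\Omega)}+\varepsilon\|\nabla\varphi(\cdot,t)\|_{L^q(\Omega)}\Bigr).
\]
Adding the two estimates gives, for a.e.\ $t\in(0,T)$,
\[
\|R_\varepsilon(\varphi)(\cdot,t)\|_{L^q(\Omega_\varepsilon)}+\varepsilon\|\nabla R_\varepsilon(\varphi)(\cdot,t)\|_{L^q(\Omega_\varepsilon)}\leq C\Bigl(\|\varphi(\cdot,t)\|_{L^q(\Omega)}+\varepsilon\|\nabla\varphi(\cdot,t)\|_{L^q(\Omega)}\Bigr).
\]

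Taking the $L^p(0,T)$ norm in $t$ of both sides (essential supremum when $p=\infty$) and using the triangle inequality for $L^p$ produces exactly \eqref{1.5}. The only point that is not completely routine is justifying the measurability in $t$ of $R_\varepsilon(\varphi)(\cdot,t)$ with values in $W^{1,q}_0(\Omega_\varepsilon;\mathbb R^3)$, which is the closest thing to an obstacle here; this follows from the linearity and boundedness of $R_\varepsilon$ on $W^{1,q}_0(\Omega;\mathbb R^3)$, applied first to simple functions in $t$ and then extended by density to all of $L^p(0,T;W^{1,q}_0(\Omega;\mathbb R^3))$. Once measurability is in hand, the proof is just the integration step above, so no further ingredients beyond \eqref{def-restriction} and Lemma \ref{lem-Poincare} are needed.
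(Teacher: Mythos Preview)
Your proof is correct and follows exactly the approach the paper indicates: the paper states that the lemma is ``rather straightforward to deduce from \eqref{def-restriction} and \eqref{1.7}'', and your argument uses precisely these two ingredients---the slicewise restriction estimate and the porous-media Poincar\'e inequality---followed by integration in $t$. The measurability remark you add is a reasonable technical aside but is not discussed in the paper.
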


\medskip

We finally give the following Korn type inequality, see for example Chapter 10 in \cite{9}:
\begin{lemma} \label{lem-Korn}(Korn inequality) Let $\Omega$ be a bounded domain of class $C^1$ and $\Omega_{\e}$ be defined in \eqref{1.2}.  Let $1<q<\infty$. For arbitrary $u\in W_0^{1,q}(\Omega_{\e};\mathbb{R}^3)$, there holds
\begin{equation}\label{1.8}
  \|\nabla u\|_{L^q(\Omega_{\e})}  \leq C(q)\|Du\|_{L^q(\Omega_{\e})},
\end{equation}
where $C(q)$ is independent of $\e$.
\end{lemma}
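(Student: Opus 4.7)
The plan is to sidestep the $\varepsilon$-dependent geometry of $\Omega_\varepsilon$ by extending $u$ trivially and then invoking the whole-space version of Korn's inequality, whose constant depends only on $q$ and the dimension. First I would observe that because $u \in W_0^{1,q}(\Omega_\varepsilon; \R^3)$ vanishes on $\partial\Omega_\varepsilon = \partial\Omega \cup \bigcup_{k \in K_\varepsilon} \partial T_{\varepsilon,k}$, its zero extension $\tilde u$ belongs to $W^{1,q}(\R^3; \R^3)$, with
$$\|\nabla\tilde u\|_{L^q(\R^3)} = \|\nabla u\|_{L^q(\Omega_\varepsilon)}, \qquad \|D\tilde u\|_{L^q(\R^3)} = \|Du\|_{L^q(\Omega_\varepsilon)}.$$

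Next, I would apply the full-space Korn inequality
$$\|\nabla v\|_{L^q(\R^3)} \leq C(q)\,\|D v\|_{L^q(\R^3)}, \qquad v \in W^{1,q}(\R^3;\R^3),$$
with $C(q)$ depending only on $q$. This is a standard consequence of the pointwise identity $\d_j \d_k v_i = \d_j D_{ik}v + \d_k D_{ij}v - \d_i D_{jk}v$ combined with the $L^q$-boundedness of Riesz transforms (Calder\'on--Zygmund theory) for $1 < q < \infty$; equivalently, it is the content of the first Korn inequality for $W_0^{1,q}$ of any bounded domain, which is the form recorded in the cited Chapter 10 of \cite{9} and is directly applicable to $\tilde u \in W_0^{1,q}(\Omega;\R^3)$. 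Feeding $\tilde u$ into the above estimate and using the identity of norms yields the claim with $C(q)$ manifestly independent of $\varepsilon$.

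I do not anticipate a serious obstacle here: once one exploits the vanishing trace of $u$ on $\partial\Omega_\varepsilon$ to extend by zero, the $\varepsilon$-dependent geometry becomes invisible and Korn's inequality reduces to its scale-invariant full-space form, so the uniformity in $\varepsilon$ is automatic. The only minor point to check is that the zero extension actually produces a function in $W^{1,q}(\R^3;\R^3)$ rather than merely in $L^q(\R^3;\R^3)$, but this is immediate from $u \in W_0^{1,q}(\Omega_\varepsilon;\R^3)$ since such functions are limits in $W^{1,q}$ of elements of $C_c^\infty(\Omega_\varepsilon;\R^3)$, whose zero extensions to $\R^3$ are smooth and compactly supported.
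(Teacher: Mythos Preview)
Your argument is correct. The paper does not supply its own proof of this lemma; it simply refers the reader to Chapter~10 of \cite{9}. Your approach---extend by zero to $\R^3$ (or to $\Omega$) and invoke the first Korn inequality, whose constant depends only on $q$ and the dimension via Calder\'on--Zygmund bounds for Riesz transforms---is precisely the standard route and is what that reference records, so there is nothing to add.
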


\subsection{Main results}
We now state our homogenization results, where the limits are taken up to possible extractions of subsequences. We shall follow the idea of  Mikeli\'{c} \cite{Mik} and Teman \cite{teman} to consider new equations by integrating original equations with respect to time variable.  Let $(\mathbf{u}_\varepsilon,p_\varepsilon)$ be a finite energy weak solution of equations (\ref{1.3}). Introduce
\begin{equation}\label{1.10.5}
  U_\varepsilon = \int_0^t\mathbf{u}_\varepsilon\, {\rm d}s, \;G_\varepsilon=\int_0^t(\mathbf{u}_\varepsilon\cdot\nabla)\mathbf{u}_\varepsilon\, {\rm d}s, \;H_\varepsilon=\int_0^t(1+\lambda| D\mathbf{u}_\varepsilon|^2)^{\frac{r}{2}-1}D\mathbf{u}_\varepsilon \, {\rm d}s, \;F=\int_0^t\mathbf{f} \, {\rm d}s.
\end{equation}
Then we have $U_\varepsilon\in C([0,T];\;W^{1,2}_{0, \rm div} (\Omega_\varepsilon)),\;G_\varepsilon\in C([0,T];L^{\frac{3}{2}}(\Omega_\varepsilon)),\;F\in C([0,T];L^2(\Omega_\varepsilon))$ and
\begin{equation*}
H_\varepsilon\in
\begin{cases}
C([0,T];L^{2}(\Omega_\varepsilon)) & 1<r\leq2,\\
C([0,T];L^{\frac{r}{r-1}}(\Omega_\varepsilon))& r>2.
\end{cases}
\end{equation*}

The classical theory of Stokes equations ensure the existence of
\begin{equation*}
P_\varepsilon\in
\begin{cases}
C_{\rm weak}([0,T]; L^2(\Omega_\varepsilon)) & 1<r\leq2,\\
C_{\rm weak}([0,T]; L^{\frac{r}{r-1}}(\Omega_\varepsilon))& r>2,
\end{cases}
\end{equation*}
such that for each $t\in [0,T]$,
\begin{equation}\label{1.11.1}
  \nabla P_\varepsilon=F-\varepsilon^2(\mathbf{u}_\varepsilon-\mathbf{u}_0)+\frac{\eta_\infty}{2}\Delta U_\varepsilon-G_\varepsilon+(\eta_0-\eta_\infty){\rm div}\,H_\varepsilon \quad \mbox{in}  \ \mathcal{D}'(\Omega_{\e}).
\end{equation}

Now we are ready to state the main theorem:
\begin{theorem}\label{thm-1}
Let $1<r<\infty$. Let $(\mathbf{u}_\varepsilon,p_\varepsilon)$ be a finite energy weak solution of equations (\ref{1.3}), and $\tilde{\mathbf{u}}_\varepsilon$ is the zero extension of $\mathbf{u}_\varepsilon$. The extension $\tilde{P}_\varepsilon$ is defined through the restriction operator as follows,
$$\langle\nabla\tilde{P}_\varepsilon,\varphi\rangle_{(0,T)\times\Omega}=\langle\nabla P_\varepsilon,R_{\varepsilon}(\varphi)\rangle_{(0,T)\times\Omega_{\varepsilon}},\quad for \ all \ \varphi\in C_c^\infty((0,T)\times\Omega),$$
where $P_\varepsilon$ is defined in (\ref{1.11.1}). Let $\tilde{p}_\e=\partial_t \tilde{P}_\varepsilon$ be the extension of $p_\e$. Then we can find $\mathbf{u}\in L^2((0,T)\times\Omega)$ and
\begin{equation*}
p\in
\begin{cases}
W^{-1,2}(0,T;L^2(\Omega)) & 1<r\leq2,\\
W^{-1,\frac{r}{r-1}}(0,T;L^{\frac{r}{r-1}}(\Omega))& r>2,
\end{cases}
\end{equation*}
which satisfy
$$ \varepsilon^{-2}\tilde{\mathbf{u}}_\varepsilon\rightarrow \mathbf{u} \  weakly \ in \ L^2((0,T)\times\Omega),$$
\begin{equation*}
\tilde{p}_\varepsilon\rightarrow p\ weakly  \ in
\begin{cases}
W^{-1,2}(0,T;L^2(\Omega)) & 1<r\leq2,\\
W^{-1,\frac{r}{r-1}}(0,T;L^{\frac{r}{r-1}}(\Omega))& r>2.
\end{cases}
\end{equation*}

Moreover, the limit $(\mathbf{u},p)$ satisfies the Darcy's law:
\begin{equation}\label{1.10.1}
  \frac{1}{2}\eta_0\mathbf{u}=A(\mathbf{f}-\nabla p) \quad \mbox{in} \ \mathcal{D'}((0,T)\times \Omega).
\end{equation}
\end{theorem}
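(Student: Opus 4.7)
The strategy is to implement the Tartar--Mikeli\'c time-integration and pressure-extension scheme on the identity \eqref{1.11.1}, combined with the crucial observation that Poincar\'e's inequality in the perforated domain (Lemma \ref{lem-Poincare}) forces the rate-of-strain $D\uu_\e$ to be small of order $\e$ in $L^2$; consequently the Carreau--Yasuda stress reduces at leading order to a Newtonian stress with effective viscosity $\eta_0$, and the homogenized system is the linear Darcy law. More precisely, from the energy inequality \eqref{energy-ineq} with $\eta_r(z)|z|^2\ge\eta_\infty|z|^2$, Korn's inequality (Lemma \ref{lem-Korn}) and Poincar\'e, one controls the forcing by $\int\!\!\int\ff\cdot\uu_\e\le C\e\|D\uu_\e\|_{L^2}$, and Young's inequality yields
\[
\|D\uu_\e\|_{L^2((0,T)\times\Omega_\e)}\le C\e,\qquad \|\uu_\e\|_{L^2((0,T)\times\Omega_\e)}\le C\e^2,
\]
with the extra bound $\|D\uu_\e\|_{L^r}\le C\e^{2/r}$ when $r>2$. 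This already produces the weak $L^2$-limit $\uu$ of $\e^{-2}\tilde\uu_\e$. The pointwise inequality $|(1+\lambda|z|^2)^{r/2-1}-1|\le C\min\{|z|^2,\,1+|z|^{\max(r-2,0)}\}$ combined with these bounds then shows that the nonlinear part of the stress is of strictly smaller order than the linear one, so that $(\eta_0-\eta_\infty)\dive H_\e+\tfrac{\eta_\infty}{2}\Delta U_\e$ is asymptotic to $\tfrac{\eta_0}{2}\Delta U_\e$.

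I would then extend the pressure through the duality $\langle\nabla\tilde P_\e,\varphi\rangle_\Omega=\langle\nabla P_\e,R_\e(\varphi)\rangle_{\Omega_\e}$: plugging in \eqref{1.11.1} and applying \eqref{1.5} term by term, the force $F$ and the inertial term $\e^2(\uu_\e-\uu_0)$ are $O(1)$ in $L^\infty_tL^2_x$; the loss of $\e^{-1}$ that arises when $\nabla R_\e(\varphi)$ is paired with $\tfrac{\eta_\infty}{2}\Delta U_\e$ or with $(\eta_0-\eta_\infty)\dive H_\e$ is absorbed by $\|D\uu_\e\|_{L^2}=O(\e)$; and the convective $G_\e$ is negligible because $\|\uu_\e\|_{L^2}=O(\e^2)$. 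The outcome is a uniform bound
\[
\|\tilde P_\e\|_{L^\infty(0,T;L^{q'}_0(\Omega))}\le C,
\]
with $q'=2$ for $1<r\le 2$ and $q'=r/(r-1)$ for $r>2$, so that $\tilde p_\e=\partial_t\tilde P_\e$ converges weakly to some $p$ in the required negative-order Sobolev space.

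To identify the limit I would follow Allaire and Tartar: for each $k\in\{1,2,3\}$ introduce oscillating correctors $w^k_\e\in W^{1,2}_{0}(\Omega_\e;\R^3)$ with $\dive w^k_\e=0$, $w^k_\e\rightharpoonup e^k$ weakly in $L^2(\Omega)$, $\|\nabla w^k_\e\|_{L^2}\le C\e^{-1}$, and such that $-\eta_0\Delta w^k_\e+\nabla q^k_\e\to Me^k$ in $H^{-1}(\Omega)$, where $M=\tfrac{\eta_0}{2}A^{-1}$ is the resistance matrix of the local Stokes problem on $\R^3\setminus T$. Testing the weak formulation of \eqref{1.3} against $w^k_\e\psi(t,x)$ for $\psi\in C_c^\infty((0,T)\times\Omega)$ and passing to the limit using the previous two steps, the $\e^2\partial_t$ and convective terms vanish by the smallness of $\uu_\e$, the linearized viscous term generates $(Me^k)\cdot\uu\,\psi$ after integration by parts and the cell-problem weak convergence, while the force and pressure couple with $e^k$. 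The resulting scalar identity is exactly the $k$-th component of $\tfrac{1}{2}\eta_0\uu=A(\ff-\nabla p)$ in $\D'((0,T)\times\Omega)$, i.e.\ \eqref{1.10.1}. The main obstacle lies in the sub-quadratic regime $1<r<2$: there the energy delivers only $L^2$ control on $D\uu_\e$ and the Carreau factor $(1+\lambda|D\uu_\e|^2)^{r/2-1}-1$ is merely uniformly bounded, so that the needed smallness of the nonlinear part of $\dive H_\e$ has to be extracted purely from the scaling $\|D\uu_\e\|_{L^2}=O(\e)$ via interpolation, in order that it not pollute the pressure extension and drop out in the limit.
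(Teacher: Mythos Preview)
Your energy estimates, pressure extension, and linearization of the Carreau--Yasuda stress are essentially the paper's argument. For the sub-quadratic case $1<r<2$ the paper uses the pointwise bound $|(1+\lambda|z|^2)^{r/2-1}-1|\le C|z|^{2-r}$, so the remainder is $\le C|D\uu_\e|^{3-r}$ and $\|D\uu_\e\|_{L^{3-r}}^{3-r}\le C\|D\uu_\e\|_{L^2}^{3-r}\le C\e^{3-r}$; after the factor $\e^{-1}$ coming from $\nabla w^{i,\e}$ this leaves $\e^{2-r}\to 0$. Your $\min\{|z|^2,1\}$ bound with a level-set split is a valid alternative and in fact yields the slightly better rate $O(\e)$.

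The place where your proposal would not go through as written is the corrector step. You describe Allaire's abstract framework with $w^k_\e\rightharpoonup e^k$ in $L^2(\Omega)$ and a resistance matrix $M=\tfrac{\eta_0}{2}A^{-1}$ built from the exterior Stokes problem on $\R^3\setminus T$; that is the set-up for the \emph{critical-size} (Brinkman) regime. Here the holes $T_{\e,k}=\e x_k+\e T$ have size comparable to the period, and the permeability $A$ in \eqref{1.10.1} is determined by Tartar's \emph{periodic} cell problem \eqref{2.9.5} on $Q_0\setminus T$, not by an exterior problem. The rescaled correctors $w^{i,\e}(x)=w^i(x/\e)$ satisfy $-\e^2\Delta w^{i,\e}+\e\nabla\pi^{i,\e}=e^i$, are bounded in $L^\infty$ with $\|\nabla w^{i,\e}\|_{L^\infty}\le C\e^{-1}$, and converge weakly to their mean $\bar w^i=\int_{Q_0}w^i$ (so $A_{ij}=\bar w^i_j$), \emph{not} to $e^i$. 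One then tests the time-integrated equation \eqref{1.11.1} (not \eqref{1.3} itself, as you write at the end) with $\phi\, w^{i,\e}$; the viscous term is identified by testing the rescaled cell equation back against $\phi\tilde U_\e$, giving $\int\nabla w^{i,\e}{:}\nabla\tilde U_\e\,\phi\to\int\e^{-2}\tilde U_\e\cdot e^i\,\phi\to\int U_i\phi$, hence a contribution $\tfrac{\eta_0}{2}U_i$. The force and the extended pressure pair with the weak limit $\bar w^i$; for the pressure this requires an additional Bogovskii correction to pass to the limit in $\int\tilde P_\e(w^{i,\e}-\bar w^i)\cdot\nabla\phi$, since both factors converge only weakly. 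This yields $\tfrac{\eta_0}{2}U=A(F-\nabla P)$, and differentiating in $t$ gives \eqref{1.10.1}.
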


\medskip

We give several remarks concerning our main results and main ideas of proof:

\begin{remark}

\begin{itemize}

\item The permeability tensor $A$ which appears in (\ref{1.10.1}) is a constant positive definite matrix defined in (\ref{1.21.0}).

\item Mikeli\'{c} considered the homogenization of nonstationary Navier-Stokes equations in \cite{Mik}, namely for the case $r=2$, and derived Darcy's law. Observing the strong convergence of the nonlinear viscosity coefficient $\eta_r(D\mathbf{u}_\varepsilon)$ to constant $\eta_0$, we derived the Darcy's law for arbitrary $r>1$.

\item The main difficulty compared to the steady case considered in \cite{FL1} lies in dealing with the estimates of the pressure $p_{\e}$, which lies in some negative order Sobolev space with respect to time variable due to the presence of the time derivative term $\d_{t} \uu_{\e}$.  We shall follow the idea of Mikeli\'{c} \cite{Mik} by integrating original equations with respect to time variable, and this allows us to define the extension of the pressure pointwisely in $t$.

\end{itemize}

\end{remark}

The rest of the paper is devoted to the proof of Theorem \ref{thm-1}. In Section 2, we derive the uniform estimates of the velocity field and pressure. In Section 3, we employ the cell problem to modify test functions, and then pass to limit in the weak formulation of the new equations to derive the limit system---Darcy's law.

\section{Uniform estimates}
In this section, we derive the uniform estimates of the solutions.  The estimates of the velocity follows from the energy inequality by using the Poincar\'e inequality and  the Korn inequality (see Lemma \ref{lem-Poincare} and \ref{lem-Korn}). Concerning the pressure $p_{\e}$,  we will not deduce the estimates of $p_{\e}$ directly. Instead, we will consider a proper extension of  its time integration $P_{\e}$ given in (\ref{1.11.1}). Such an extension is defined by a dual formula using restriction operator pointwisely in $t$.  The estimates of the extension of ${P_\varepsilon}$ follow from the estimates of $U_{\e}$ and the estimates of the restriction operator.

\subsection{Estimates of velocity field}

Based on the energy inequality \eqref{energy-ineq}, we can derive the following estimates of velocity field $\uu_{\e}$:
\begin{proposition}\label{pro2.1}
Let $\mathbf{u}_\varepsilon$ be a weak solution of (\ref{1.3}) in the sense of Definition \ref{def-weak}. There holds
\ba\label{2.1.01}
&\|\nabla \mathbf{u}_\varepsilon\|_{L^2((0,T)\times\Omega_\varepsilon)} \leq  C\varepsilon, \quad &&\| \mathbf{u}_\varepsilon\|_{L^2((0,T)\times\Omega_\varepsilon)} \leq  C\varepsilon^{2}, \quad \| \mathbf{u}_\varepsilon\|_{L^\infty(0,T;L^{2}(\Omega_\varepsilon))} \leq  C,\\
&\|\nabla \mathbf{u}_\varepsilon\|_{L^r((0,T)\times\Omega_\varepsilon)} \leq  C\varepsilon^{\frac 2r}, \quad &&\| \mathbf{u}_\varepsilon\|_{L^r((0,T)\times\Omega_\varepsilon)} \leq  C\varepsilon^{\frac 2 r +1}, \quad \mbox{if} \ r>2.
\ea

\end{proposition}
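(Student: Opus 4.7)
The plan is to read all five bounds off the energy inequality \eqref{energy-ineq} by combining it with the Poincar\'e inequality of Lemma \ref{lem-Poincare} and the Korn inequality of Lemma \ref{lem-Korn}. The structural observation driving everything is that the Poincar\'e inequality in the $\varepsilon$-perforated domain carries a factor $\varepsilon$, which matches exactly the $\varepsilon^{2}$ in front of the kinetic energy and allows the forcing to be absorbed after a single Young inequality.

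First I would bound the forcing term on the right-hand side of \eqref{energy-ineq}. Cauchy--Schwarz in space-time followed by Lemma \ref{lem-Poincare} gives
\[
\Big|\int_0^t\int_{\Omega_\varepsilon}\mathbf{f}\cdot \mathbf{u}_\varepsilon \,{\rm d}x\,{\rm d}s\Big| \leq \|\mathbf{f}\|_{L^{2}((0,T)\times\Omega)} \|\mathbf{u}_\varepsilon\|_{L^{2}((0,t)\times\Omega_\varepsilon)} \leq C\varepsilon\|\mathbf{f}\|_{L^{2}}\|\nabla \mathbf{u}_\varepsilon\|_{L^{2}((0,t)\times\Omega_\varepsilon)}.
\]
The Carreau-Yasuda coefficient satisfies $\eta_r(D\mathbf{u}_\varepsilon)\geq\eta_0$ when $r\geq 2$ (because $(1+\lambda|D\mathbf{u}_\varepsilon|^{2})^{r/2-1}\geq 1$) and $\eta_r(D\mathbf{u}_\varepsilon)\geq\eta_\infty$ for $1<r<2$, so it is uniformly bounded below by some $c_{0}>0$. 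Combined with Lemma \ref{lem-Korn}, the dissipation term in \eqref{energy-ineq} therefore dominates a constant multiple of $\|\nabla\mathbf{u}_\varepsilon\|_{L^{2}((0,t)\times\Omega_\varepsilon)}^{2}$. Plugging in the forcing bound and absorbing via Young's inequality, I obtain, for all $t\in(0,T)$,
\[
\varepsilon^{2}\|\mathbf{u}_\varepsilon(t)\|_{L^{2}(\Omega_\varepsilon)}^{2} + C\|\nabla\mathbf{u}_\varepsilon\|_{L^{2}((0,t)\times\Omega_\varepsilon)}^{2} \leq \varepsilon^{2}\|\mathbf{u}_0\|_{L^{2}}^{2} + C\varepsilon^{2}\|\mathbf{f}\|_{L^{2}((0,T)\times\Omega)}^{2}.
\]
Taking $t=T$ gives $\|\nabla\mathbf{u}_\varepsilon\|_{L^{2}((0,T)\times\Omega_\varepsilon)}\leq C\varepsilon$, and a further application of Lemma \ref{lem-Poincare} yields $\|\mathbf{u}_\varepsilon\|_{L^{2}((0,T)\times\Omega_\varepsilon)}\leq C\varepsilon^{2}$; taking the essential supremum in $t$ of the first term on the left delivers $\|\mathbf{u}_\varepsilon\|_{L^\infty(0,T;L^{2}(\Omega_\varepsilon))}\leq C$.

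For $r>2$ I would additionally exploit the shear-thickening growth: since $r/2-1>0$, the elementary inequality $(1+\lambda|D\mathbf{u}_\varepsilon|^{2})^{r/2-1}\geq(\lambda|D\mathbf{u}_\varepsilon|^{2})^{r/2-1}$ gives
\[
\eta_r(D\mathbf{u}_\varepsilon)|D\mathbf{u}_\varepsilon|^{2} \geq (\eta_0-\eta_\infty)\lambda^{r/2-1}|D\mathbf{u}_\varepsilon|^{r}.
\]
Hence the dissipation in \eqref{energy-ineq} additionally controls $c\|D\mathbf{u}_\varepsilon\|_{L^{r}((0,T)\times\Omega_\varepsilon)}^{r}$, and combined with the bound $C\varepsilon^{2}$ on the right-hand side already obtained this gives $\|D\mathbf{u}_\varepsilon\|_{L^{r}}^{r}\leq C\varepsilon^{2}$, i.e.\ $\|D\mathbf{u}_\varepsilon\|_{L^{r}}\leq C\varepsilon^{2/r}$. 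Korn and then Poincar\'e finally promote this to $\|\nabla\mathbf{u}_\varepsilon\|_{L^{r}}\leq C\varepsilon^{2/r}$ and $\|\mathbf{u}_\varepsilon\|_{L^{r}}\leq C\varepsilon^{2/r+1}$.

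There is no real obstacle here; the only subtlety worth emphasising is that the constants in Lemmas \ref{lem-Poincare} and \ref{lem-Korn}, and hence in all the estimates above, are independent of $\varepsilon$. Thus the entire $\varepsilon$-scaling of the final bounds is produced by the explicit factor $\varepsilon$ in Poincar\'e and by the $\varepsilon^{2}$ in front of $\partial_t\mathbf{u}_\varepsilon$ in \eqref{1.3}.
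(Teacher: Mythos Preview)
Your proof is correct and follows essentially the same route as the paper: bound the forcing term via Cauchy--Schwarz and the Poincar\'e inequality in $\Omega_\varepsilon$, use the uniform lower bound on the dissipation together with Korn to control $\|\nabla\mathbf{u}_\varepsilon\|_{L^2}$, absorb to get the $C\varepsilon$ bound, and then, for $r>2$, read off the additional $L^r$ control from the $(\eta_0-\eta_\infty)(1+\lambda|D\mathbf{u}_\varepsilon|^2)^{r/2-1}|D\mathbf{u}_\varepsilon|^2$ term. The only cosmetic difference is that you invoke Young's inequality where the paper leaves the quadratic inequality $\|D\mathbf{u}_\varepsilon\|_{L^2}^2\leq C\varepsilon\|D\mathbf{u}_\varepsilon\|_{L^2}+C\varepsilon^2$ and resolves it directly.
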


\begin{proof}

We first rewrite the energy inequality \eqref{energy-ineq} as
\ba\label{est-u-1}
  &\frac{\varepsilon^2}{2}\int_{\Omega_\varepsilon}\mathbf{u}_\varepsilon^2\,{\rm d}x+\int_0^t\int_{\Omega_\varepsilon}\eta_\infty| D\mathbf{u}_\varepsilon|^2+(\eta_0-\eta_\infty)(1+\lambda|D\mathbf{u}_\varepsilon|^2)^{\frac{r}{2}-1}| D\mathbf{u}_\varepsilon|^2\,{\rm d}x{\rm d}t\\
  &\quad \leq\int_0^t\int_{\Omega_\varepsilon}\mathbf{f}\cdot \mathbf{u}_\varepsilon \,{\rm d}x{\rm d}t
  +\frac{\varepsilon^2}{2}\int_{\Omega_\varepsilon}\mathbf{u}_0^2\,{\rm d}x, \quad
 \mbox{for a.a. $0<t\leq T$.}
\ea

Applying the Poincar\'e inequality in porous media and the Korn inequality (see Lemma \ref{lem-Poincare} and \ref{lem-Korn}) gives
\ba\label{est-u-2}
\int_0^T\int_{\Omega_\varepsilon}\mathbf{f}\cdot \mathbf{u}_\varepsilon \,{\rm d}x{\rm d}t
\leq C \e  \int_0^T \|\mathbf{f}\|_{L^{2}(\Omega)} \|D\mathbf{u}_\varepsilon\|_{L^{2}(\Omega_{\e})}  {\rm d}t.
\ea
Then from \eqref{est-u-1} and \eqref{est-u-2}, and the assumptions that $\ff$ and $\uu_{0}$ are independent of $\e$ and are in $L^{2}((0,T)\times \Omega)$,   we deduce
\ba
\|D\mathbf{u}_\varepsilon\|^2_{L^2((0,T)\times\Omega_\varepsilon)} \leq C\varepsilon \|D\mathbf{u}_\varepsilon\|_{L^2((0,T)\times\Omega_\varepsilon)} \| \ff \|_{L^2((0,T)\times\Omega_\varepsilon)}  + C\varepsilon^2.
\nn\ea
Again by the Poincar\'e inequality (\ref{1.7}) and the Korn inequality (\ref{1.8}), we obtain the $L^{2}$ estimates in $\eqref{2.1.01}_{1}$.

\medskip

If $r>2$,  using the estimates in $\eqref{2.1.01}_{1}$, we deduce from \eqref{est-u-1} that
\ba
\|D\mathbf{u}_\varepsilon\|^r_{L^r((0,T)\times\Omega_\varepsilon)} \leq C\varepsilon^2,
\nn\ea
and the $L^{r}$ estimates in $\eqref{2.1.01}_{2}$ follow from the Poincar\'e inequality and the Korn inequality.

\end{proof}

By the uniform estimates of velocity in Proposition \ref{pro2.1}, we have the following uniform estimates:
\begin{corollary}\label{cor-est-U}
Let $U_{\e}$, $G_{\e}$ and $H_{\e}$ be defined as in \eqref{1.10.5}.  Then
\ba\label{est-U}
& \|U_\varepsilon\|_{W^{1,2}(0,T; W^{1,2}_{0}(\Omega_\varepsilon))}\leq C \e, \quad \|U_\varepsilon\|_{W^{1,2}(0,T; L^{2}(\Omega_\varepsilon))}\leq C \e^{2}, \\
& \|U_\varepsilon\|_{W^{1,r}(0,T; W^{1,r}_{0}(\Omega_\varepsilon))}\leq C \e^{\frac 2r}, \quad \|U_\varepsilon\|_{W^{1,r}(0,T; L^{r}(\Omega_\varepsilon))}\leq C \e^{\frac 2 r +1}, \quad \mbox{if} \  r >2,\\
& \|G_\varepsilon\|_{W^{1,1}(0,T; L^{\frac 32}(\Omega_\varepsilon))}\leq C \e^{2}, \\
& \|H_\varepsilon\|_{W^{1,2}(0,T; L^{2}(\Omega_\varepsilon))}\leq C \e,  \quad \mbox{if} \ 1< r \leq 2,\\
& \|H_\varepsilon\|_{W^{1,\frac{r}{r-1}}(0,T; L^{\frac{r}{r-1}}(\Omega_\varepsilon))} \leq C \e,  \quad  \mbox{if} \ r>2.
\ea
\end{corollary}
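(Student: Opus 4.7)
The plan is to exploit the fundamental theorem of calculus: since $U_\varepsilon, G_\varepsilon, H_\varepsilon$ are time antiderivatives of known quantities involving $\uu_\varepsilon$ and $D\uu_\varepsilon$, the $L^p$-in-time Bochner norms of their time derivatives reduce directly to the spatial estimates already recorded in Proposition~\ref{pro2.1}, while the norms of $U_\varepsilon, G_\varepsilon, H_\varepsilon$ themselves follow by Minkowski's integral inequality
\ba
\Big\|\int_0^t f(s)\,{\rm d}s\Big\|_X \leq \int_0^T \|f(s)\|_X\,{\rm d}s \leq T^{1-\frac1p}\|f\|_{L^p(0,T;X)},
\nn\ea
so each line of \eqref{est-U} becomes a one-line consequence once the correct integrand is bounded in the right space-time norm.

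Concretely, I would first treat $U_\varepsilon$: since $\partial_t U_\varepsilon = \uu_\varepsilon$, each $W^{1,p}(0,T;X)$ norm is controlled by $\|\uu_\varepsilon\|_{L^p(0,T;X)}$ via Minkowski, and this quantity is a direct invocation of Proposition~\ref{pro2.1} (using Korn's inequality \eqref{1.8} to control $\|\nabla\uu_\varepsilon\|$ by $\|D\uu_\varepsilon\|$, and Poincaré \eqref{1.7} to control $\|\uu_\varepsilon\|$ by $\|\nabla\uu_\varepsilon\|$). For $G_\varepsilon$ I would apply the 3D Sobolev embedding $W^{1,2}_0(\Omega_\varepsilon)\hookrightarrow L^6(\Omega_\varepsilon)$ (valid with an $\varepsilon$-independent constant via zero extension to $\R^3$) together with Hölder to get
\ba
\|(\uu_\varepsilon\cdot\nabla)\uu_\varepsilon\|_{L^{\frac32}(\Omega_\varepsilon)} \leq \|\uu_\varepsilon\|_{L^6(\Omega_\varepsilon)}\|\nabla\uu_\varepsilon\|_{L^2(\Omega_\varepsilon)} \leq C\|\nabla\uu_\varepsilon\|_{L^2(\Omega_\varepsilon)}^2,
\nn\ea
so that $\|\partial_t G_\varepsilon\|_{L^1(0,T;L^{3/2})} \leq C\|\nabla\uu_\varepsilon\|_{L^2((0,T)\times\Omega_\varepsilon)}^2 \leq C\varepsilon^2$ by Proposition~\ref{pro2.1}, and Minkowski upgrades this to the same bound on $G_\varepsilon$ itself.

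The main obstacle is the nonlinear $H_\varepsilon$, which must be split into the two cases. When $1<r\leq 2$ the exponent $r/2-1\leq 0$ gives the pointwise bound $(1+\lambda|D\uu_\varepsilon|^2)^{r/2-1}|D\uu_\varepsilon|\leq |D\uu_\varepsilon|$, so the $L^2$-estimate on $D\uu_\varepsilon$ from Proposition~\ref{pro2.1} directly produces the $C\varepsilon$ bound. When $r>2$, the superlinear growth $(1+\lambda|D\uu_\varepsilon|^2)^{r/2-1}|D\uu_\varepsilon|\leq C(|D\uu_\varepsilon|+|D\uu_\varepsilon|^{r-1})$ must be balanced in $L^{r/(r-1)}$: the linear piece is handled by Hölder in $(0,T)\times\Omega_\varepsilon$ (using $r/(r-1)\leq 2$ on a set of bounded measure) against $\|D\uu_\varepsilon\|_{L^2}\leq C\varepsilon$, while the nonlinear piece satisfies
\ba
\big\||D\uu_\varepsilon|^{r-1}\big\|_{L^{\frac{r}{r-1}}((0,T)\times\Omega_\varepsilon)} = \|D\uu_\varepsilon\|_{L^r((0,T)\times\Omega_\varepsilon)}^{r-1} \leq C\varepsilon^{\frac{2(r-1)}{r}} \leq C\varepsilon,
\nn\ea
the last inequality using that $2(r-1)/r\geq 1$ for $r\geq 2$ and $\varepsilon\in(0,1)$. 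Applying Minkowski once more transfers the estimate from $\partial_t H_\varepsilon$ to $H_\varepsilon$, completing the proof.
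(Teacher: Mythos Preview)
Your proposal is correct and follows essentially the same route as the paper: both derive the $U_\varepsilon$ estimates directly from Proposition~\ref{pro2.1}, handle $G_\varepsilon$ via the Sobolev embedding $W^{1,2}_0\hookrightarrow L^6$ combined with H\"older, and split $H_\varepsilon$ into the cases $1<r\leq 2$ and $r>2$ using exactly the same pointwise bounds $(1+\lambda|D\uu_\varepsilon|^2)^{r/2-1}|D\uu_\varepsilon|\leq |D\uu_\varepsilon|$ and $(1+\lambda|D\uu_\varepsilon|^2)^{r/2-1}|D\uu_\varepsilon|\leq C(|D\uu_\varepsilon|+|D\uu_\varepsilon|^{r-1})$, respectively. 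Your explicit invocation of Minkowski's inequality to pass from $\partial_t$-bounds to bounds on the primitives, and your remark that the Sobolev constant is $\varepsilon$-independent via zero extension, are welcome clarifications that the paper leaves implicit.
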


\begin{proof}

The estimates for $U_{\e}$ in $\eqref{est-U}_{1}$ and $\eqref{est-U}_{2}$ follow immediately from its definition and the uniform estimates of ${\mathbf u}_{\e}$ in \eqref{2.1.01}.

\medskip

Using Sobolev embedding and H\"older's inequality gives
\ba
\| \uu_{\e} \cdot \nabla \uu_{\e}\|_{L^{\frac{3}{2}}(\Omega_{\e})} \leq  \| \uu_{\e} \|_{L^{6}(\Omega_{\e})}\| \nabla \uu_{\e} \|_{L^{2}(\Omega_{\e})} \leq C\| \nabla \uu_{\e} \|_{L^{2}(\Omega_{\e})}^{2}.
\nn\ea
Thus
$$
\| \uu_{\e} \cdot \nabla \uu_{\e}\|_{L^{1}(0,T; L^{\frac{3}{2}}(\Omega_{\e}))} \leq C\| \nabla \uu_{\e} \|_{L^{2}((0,T)\times\Omega_{\e})}^{2} \leq C \e^{2},
$$
which gives the estimates of $G_{\e}$ in \eqref{est-U}.

\medskip

We turn to the estimates of $H_{\e}$.  If $1<r\leq2$, there holds
\ba\label{2.4}
|(1+\lambda| D\mathbf{u}_\varepsilon|^2)^{\frac{r}{2}-1}D\mathbf{u}_\varepsilon | \leq | D \uu_{\e}|.
\nn \ea
Therefore
$$\|H_\varepsilon\|_{W^{1,2}(0,T; L^{2}(\Omega_\varepsilon))}\leq  C\|(1+\lambda| D\mathbf{u}_\varepsilon|^2)^{\frac{r}{2}-1}D\mathbf{u}_\varepsilon \|_{L^{2}((0,T)\times \Omega_\varepsilon)}   \leq C \| D \uu_{\e}\|_{L^{2}((0,T)\times \Omega_\varepsilon)}   \leq  C\e.
$$

If $r>2$, we have
\ba\label{2.5}
|(1+\lambda| D\mathbf{u}_\varepsilon|^2)^{\frac{r}{2}-1}D\mathbf{u}_\varepsilon | \leq C  | D \uu_{\e}|  + C | D \uu_{\e}|^{r-1}.
\nn \ea
Therefore
\ba
 \|H_\varepsilon\|_{W^{1,\frac{r}{r-1}}(0,T; L^{\frac{r}{r-1}}(\Omega_\varepsilon))} &  \leq  C \|  (1+\lambda| D\mathbf{u}_\varepsilon|^2)^{\frac{r}{2}-1}D\mathbf{u}_\varepsilon \|_{L^{\frac{r}{r-1}}((0,T)\times \Omega_\varepsilon)} \\
 & \leq  C \| D \uu_{\e}\|_{L^{\frac{r}{r-1}}((0,T)\times\Omega_\varepsilon)}  + C \| D \uu_{\e}\|_{L^{r}((0,T)\times \Omega_\varepsilon)}^{r-1}  \\
  & \leq  C \| D \uu_{\e}\|_{L^{2}((0,T)\times\Omega_\varepsilon)}  + C \| D \uu_{\e}\|_{L^{r}((0,T)\times \Omega_\varepsilon)}^{r-1}\\
    & \leq C  \e + C \e^{\frac{2(r-1)}{r}}\leq C \e,
\nn \ea
where we use the fact $\frac{r}{r-1} < 2$ and $\frac{2(r-1)}{r} >1$ if $r>2$.

\end{proof}

\subsection{Estimates of pressure: extension}
Next we will define the extension of pressure and derive corresponding estimates based on the uniform estimates in (\ref{est-U}).
\begin{proposition}\label{pro2.1.1}
Let $\tilde{P_\varepsilon}$ be the extension of $P_\varepsilon$ defined by the restriction operator as follows
\begin{equation}\label{pre def}
 \langle\nabla \tilde{P_\varepsilon},\varphi\rangle_{(0,T)\times\Omega}=\langle\nabla P_\varepsilon,R_\varepsilon(\varphi)\rangle_{(0,T)\times\Omega_\varepsilon},\quad \forall\varphi\in C_c^\infty((0,T)\times\Omega),
\end{equation}
where $P_\varepsilon$ is defined in (\ref{1.11.1}) and the restriction operator $R_{\e}$ is given in (\ref{res time def}) satisfying the estimates (\ref{1.5}).
Then we have the following estimates:
\begin{equation}\label{pressure est}
\big|\langle\nabla \tilde{P_\varepsilon},\varphi\rangle_{(0,T)\times\Omega}\big|\leq C
\begin{cases}
\|\varphi\|_{L^{2}((0,T)\times\Omega)}+\e\| \nabla \varphi\|_{L^{2}((0,T)\times\Omega)}\quad 1<r\leq2,\\
\|\varphi\|_{L^{r}((0,T)\times\Omega)}+\e\| \nabla \varphi\|_{L^{r}((0,T)\times\Omega)}\quad r>2.
\end{cases}
\end{equation}
\end{proposition}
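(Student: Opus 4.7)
The plan is to substitute the explicit expression \eqref{1.11.1} for $\nabla P_\e$ into the defining duality \eqref{pre def}, reducing the claim to five separate pairings of the form $\langle T_i, R_\e(\varphi)\rangle_{(0,T)\times\Omega_\e}$ with $T_i$ running through $F$, $\e^2(\uu_\e-\uu_0)$, $\tfrac{\eta_\infty}{2}\Delta U_\e$, $G_\e$, and $(\eta_0-\eta_\infty)\,{\rm div}\,H_\e$. Each pairing will be bounded by H\"older's inequality, combining a uniform-in-$\e$ estimate for $T_i$ coming from Proposition \ref{pro2.1} and Corollary \ref{cor-est-U} with a control on $R_\e(\varphi)$ or $\nabla R_\e(\varphi)$ provided by \eqref{1.5}.

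First I would handle the low-order terms. Since $F$ and $\e^2(\uu_\e-\uu_0)$ are bounded in $L^\infty(0,T;L^2(\Omega_\e))$, Cauchy--Schwarz followed by \eqref{1.5} yields a bound by $C(\|\varphi\|_{L^2}+\e\|\nabla\varphi\|_{L^2})$. For the linear viscous term I would integrate by parts to get $-\tfrac{\eta_\infty}{2}\langle \nabla U_\e,\nabla R_\e(\varphi)\rangle$; the estimate $\|\nabla U_\e\|_{L^2}\leq C\e$ from Corollary \ref{cor-est-U} exactly cancels the $\e^{-1}$ loss in $\|\nabla R_\e(\varphi)\|_{L^2}$ coming from \eqref{1.5}. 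An analogous integration by parts turns the nonlinear viscous term into $-(\eta_0-\eta_\infty)\langle H_\e,\nabla R_\e(\varphi)\rangle$; the same cancellation closes it, pairing $L^2$ against $L^2$ when $1<r\le 2$ and $L^{r/(r-1)}$ against $L^r$ when $r>2$, using \eqref{1.5} in the appropriate exponent.

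The most delicate piece, and the main obstacle, is the convective term $\langle G_\e,R_\e(\varphi)\rangle$. Corollary \ref{cor-est-U} provides $\|G_\e\|_{W^{1,1}(0,T;L^{3/2}(\Omega_\e))}\leq C\e^2$, which embeds into $L^\infty(0,T;L^{3/2}(\Omega_\e))$, and H\"older then forces pairing with $R_\e(\varphi)$ in $L^1_t L^3_x$. My plan is to control $\|R_\e(\varphi)\|_{L^3(\Omega_\e)}$ via the 3D Sobolev embedding $W^{1,2}_0\hookrightarrow L^6$, combined by interpolation with the porous-medium Poincar\'e inequality (Lemma \ref{lem-Poincare}), in order to convert $R_\e(\varphi)$-norms into $\nabla R_\e(\varphi)$-norms and then apply \eqref{1.5}. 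The $\e^{-1}$ produced by \eqref{1.5} is comfortably absorbed by the prefactor $\e^2$, so the convective contribution is in fact of size $O(\e)$ times $(\|\varphi\|_{L^2}+\e\|\nabla\varphi\|_{L^2})$; the only care required is keeping track of the Sobolev loss, \eqref{1.5}, and the $\e$-scalings simultaneously so that the bound lands in the stated norm. Finally, in the case $r>2$, all the $L^2$-norms of $\varphi$ appearing in the first four terms are converted into the required $L^r$-norms via the bounded-domain inclusion $L^r((0,T)\times\Omega)\hookrightarrow L^2((0,T)\times\Omega)$.
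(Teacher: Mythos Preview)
Your proposal is correct and follows essentially the same route as the paper: substitute \eqref{1.11.1} into \eqref{pre def}, bound each of the five pairings separately using Proposition~\ref{pro2.1} and Corollary~\ref{cor-est-U}, and close with \eqref{1.5}. The only minor difference is in the convective term, which you treat as the main obstacle; the paper dispatches it in one line by pairing $\|G_\e\|_{L^2(0,T;L^{3/2}(\Omega_\e))}\leq C\e^2$ directly against $\|R_\e(\varphi)\|_{L^2(0,T;L^3(\Omega_\e))}\leq C\|\nabla R_\e(\varphi)\|_{L^2((0,T)\times\Omega_\e)}$ via the Sobolev embedding $W^{1,2}_0\hookrightarrow L^3$ (the constant being uniform in $\e$ since functions extend by zero to $\Omega$), so your additional interpolation with Poincar\'e is not needed.
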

\begin{proof}

By $(\ref{1.11.1})$, we have
\begin{align*}
\langle\nabla \tilde{P_\varepsilon},\varphi\rangle_{(0,T)\times\Omega}&=\langle\nabla P_\varepsilon,R_\varepsilon(\varphi)\rangle_{(0,T)\times\Omega_\varepsilon} \\
   &=\langle F-\varepsilon^2 (\mathbf{u}_\varepsilon-\mathbf{u}_0)+\frac{\eta_\infty}{2}\Delta U_\varepsilon-G_\varepsilon
  +(\eta_0-\eta_\infty){\rm div }\,H_\varepsilon, R_\varepsilon(\varphi)\rangle_{(0,T)\times\Omega_\varepsilon}.
\end{align*}
Using the fact $F$ and $\uu_{0}$ are both in $L^{2}((0,T)\times \Omega;\R^{3})$ and $\eqref{2.1.01}_{1}$ implies
\begin{align*}
  &\big|\langle F-\varepsilon^2 (\mathbf{u}_\varepsilon-\mathbf{u}_0), R_\varepsilon(\varphi)\rangle_{(0,T)\times\Omega_\varepsilon}\big|\leq C \|R_\varepsilon(\varphi)\|_{L^2((0,T)\times\Omega_\varepsilon)}.
\end{align*}
By the estimates in (\ref{est-U}) and Sobolev embedding inequality, we have:
\ba
\big|\langle \Delta U_\varepsilon, R_\varepsilon(\varphi) \rangle_{(0,T)\times\Omega_\varepsilon}\big| &\leq \|\nabla U_\varepsilon\|_{L^2((0,T)\times\Omega_\varepsilon)}\|\nabla R_\varepsilon(\varphi)\|_{L^2((0,T)\times\Omega_\varepsilon)}\leq C\e \|\nabla R_\varepsilon(\varphi)\|_{L^2((0,T)\times\Omega_\varepsilon)},\\
  \big|\langle G_\varepsilon, R_\varepsilon(\varphi) \rangle_{(0,T)\times\Omega_\varepsilon}\big| &\leq\|G_\e\|_{L^2(0,T;L^{\frac{3}{2}}(\O_\e))}\|
R_\varepsilon(\varphi)\|_{L^2(0,T;L^3(\Omega_\varepsilon))}\leq C\e^2 \|\nabla R_\varepsilon(\varphi)\|_{L^2((0,T)\times\Omega_\varepsilon)},\\
  \big|\langle \dive H_\varepsilon, R_\varepsilon(\varphi) \rangle_{(0,T)\times\Omega_\varepsilon}\big| & \leq\|H_\e\|_{L^2((0,T)\times\Omega_\varepsilon)}\| \nabla R_\varepsilon(\varphi)\|_{L^2((0,T)\times\Omega_\varepsilon)} \\
  & \leq C\e\| \nabla R_\varepsilon(\varphi)\|_{L^2((0,T)\times\Omega_\varepsilon)}\quad \mbox{if} \ 1< r \leq 2,\\
  \big|\langle \dive H_\varepsilon, R_\varepsilon(\varphi) \rangle_{(0,T)\times\Omega_\varepsilon}\big| & \leq\|H_\e\|_{L^{\frac{r}{r-1}}((0,T)\times\Omega_\varepsilon)}\| \nabla R_\varepsilon(\varphi)\|_{L^r((0,T)\times\Omega_\varepsilon)} \\
  & \leq C\e\| \nabla R_\varepsilon(\varphi)\|_{L^r((0,T)\times\Omega_\varepsilon)}\quad \mbox{if} \ r> 2.
\nn\ea
Hence, by the estimates of restriction operator in (\ref{1.5}), we have  for $1<r\leq2$,
\begin{align*}
\big|\langle\nabla \tilde{P_\varepsilon},\varphi\rangle_{(0,T)\times\Omega}\big|&\leq C\big( \| R_\varepsilon(\varphi)\|_{L^2((0,T)\times\Omega_\varepsilon) }+  \e\| \nabla R_\varepsilon(\varphi)\|_{L^2((0,T)\times\Omega_\varepsilon)}\big)\\
&\leq C\big(\|\varphi\|_{L^2((0,T)\times\Omega)}+\e\| \nabla \varphi\|_{L^2((0,T)\times\Omega)}\big).
\end{align*}
For $r>2$,
\begin{align*}
\big|\langle\nabla \tilde{P_\varepsilon},\varphi\rangle_{(0,T)\times\Omega}\big|&\leq C\big( \| R_\varepsilon(\varphi)\|_{L^r((0,T)\times\Omega_\varepsilon) }+\e\| \nabla R_\varepsilon(\varphi)\|_{L^r((0,T)\times\Omega_\varepsilon)} \big) \\
&\leq C\big(\|\varphi\|_{L^r((0,T)\times\Omega)}+\e\| \nabla \varphi\|_{L^r((0,T)\times\Omega)}\big).
\end{align*}

The proof is thus completed.
\end{proof}

\section{Homogenization process}

This section is devoted to the limit passage and derive the limit equations. We first introduce the cell problem which is used to modify test functions. Then by the estimates in Proposition \ref{pro2.1} and Corollary \ref{cor-est-U}, we can pass to the limit term by term to get the limit system.

\subsection{Cell Problem}
 To obtain the limit system, a natural way is to pass $\varepsilon\rightarrow0$ in the weak formulation of (\ref{1.3}), a proper surgery on $C_c^\infty(\Omega)$ test functions needs to be done such that the test functions vanish on the holes and then become good test functions for the original equations in $\Omega_\varepsilon$. To this issue, Tartar \cite{tartar} considered the Stokes equations where the size of the holes is proportional to the mutual distance of the holes. Then near each single hole in $\varepsilon Q_k$ in the perforated domain $\Omega_\varepsilon$, after a scaling of size $\varepsilon^{-1}$, there arises typically the following problem, named cell problem:

Let $(w^i,\pi^i)(i=1,2,3)$ be a $Q_0$-periodic solution of the following cell problem
\begin{equation}\label{2.9.5}
\begin{cases}
-\Delta w^i+\nabla\pi^i=e^i&{\rm in} \ Q_0\setminus T,\\
{\rm div}\, w^i=0&{\rm in} \ Q_0\setminus T,\\
w^i=0& {\rm on}\ T.
\end{cases}
\end{equation}
Here $\{e^i\}_{i=1,2,3}$ is the standard Euclidean coordinate of $\mathbb R^3$. The cell problem (\ref{2.9.5}) admits a unique weak solution $(w^i,\pi^i)\in W^{1,2}(Q_0\setminus T;\mathbb{R}^3)\times L_0^2(Q_0\setminus T)$ with $(w^i, \pi^i)$ $Q_0-{\rm periodic}$. Moreover, under the assumption $T$ is of class $C^{2, \mu}$, one has
\ba\label{est-wi}
\|w^{i}\|_{W^{1, \infty}(Q_{0}\setminus T)} + \|\pi^{i}\|_{L^{\infty}(Q_{0}\setminus T)}  \leq C.
\ea
The permeability tensor $A$ is defined as
\begin{equation}\label{1.21.0}
A_{i,j}=\int_{Q_0}w^i_j(y)\,{\rm d}y,\qquad A=(A_{i,j})_{1\leq i,j\leq3},
\end{equation}
where $w^i_j$ denotes the  $j$-th component of vector $w^i$. It is shown in \cite{tartar} that $A$ is symmetric and positive definite.

Next we set
\begin{equation*}
  w^{i,\varepsilon}(x)=w^i(\frac{x}{\varepsilon}),\qquad \pi^{i,\varepsilon}(x)=\pi^i(\frac{x}{\varepsilon}).
\end{equation*}
Then $(w^{i,\varepsilon}(x), \pi^{i,\varepsilon}(x))$ satisfies the following equations
\begin{equation}\label{2.12.5}
\begin{cases}
-\varepsilon\nabla\pi^{i,\varepsilon}+\varepsilon^2\Delta w^{i,\varepsilon}+e^i=0&{\rm in}\;\varepsilon Q_0\setminus\varepsilon T,\\
{\rm div}\;w^{i,\varepsilon}=0&{\rm in}\;\varepsilon Q_0\setminus\varepsilon T,\\
w^{i,\varepsilon}=0&{\rm on}\;\varepsilon T,\\
(w^{i,\varepsilon},\pi^{i,\varepsilon}) \ {\rm is} \ \varepsilon Q_0-{\rm periodic}.
\end{cases}
\end{equation}
{Moreover, it follows from \eqref{est-wi} that}
\begin{equation}\label{2.13}
  \|w^{i,\varepsilon}\|_{L^\infty(\Omega_\varepsilon)}\leq C, \quad  \|\nabla w^{i,\varepsilon}\|_{L^\infty(\Omega_\varepsilon)}\leq C\varepsilon^{-1},\quad
   \|\pi^{i,\varepsilon}\|_{L^\infty(\Omega_\varepsilon)}\leq C.
\end{equation}

By the fact that $w^i$ is $ Q_0-{\rm periodic}$, using (\ref{2.13}) implies
\begin{equation}\label{w^i con}
  w^{i,\varepsilon}\rightarrow \bar{w}^i := \int_{Q_{0}} w^{i} (y)\, {\rm d}y\quad {\rm weakly \ in \ } L^r(\O),
\end{equation}
for each $1<r<\infty$.

\subsection{Passing to the limit}

Our main theorem actually follows from the following key proposition:
\begin{proposition}\label{pro2.2}
Let $(U_{\e},P_{\e})$ be the solutions of the equation (\ref{1.11.1}). Let $\tilde{P}_\varepsilon$ be the extension of $P_{\e}$ defined in (\ref{pre def}) and $\tilde{U}_\varepsilon$ be the zero extension of $U_{\e}$. Then we can find $U\in L^2((0,T)\times\Omega)$ and
\begin{equation}\label{p def}
P\in
\begin{cases}
L^2((0,T)\times\Omega) & 1<r\leq2,\\
L^{\frac{r}{r-1}}((0,T)\times\Omega) & r>2,
\end{cases}
\end{equation}
such that
\begin{equation}\label{vel conver}
  \varepsilon^{-2}\tilde{U}_\varepsilon\rightarrow U \  weakly \ in \ L^2((0,T)\times\Omega),
\end{equation}
\begin{equation}\label{p con}
\tilde{P_\varepsilon}\rightarrow P \  weakly \  in
\begin{cases}
L^2((0,T)\times\Omega) & 1<r\leq2,\\
L^{\frac{r}{r-1}}((0,T)\times\Omega) & r>2.
\end{cases}
\end{equation}
Moreover, the limit $(U,P)$ satisfies the Darcy's law:
\ba\label{Darcy-U}\frac{1}{2}\eta_0U=A(F-\nabla P)\quad {\rm in}\,\mathcal{D'}((0,T)\times\Omega).\ea
Here the permeability tensor $A$ is a constant positive definite matrix determined in (\ref{1.21.0}).
\begin{proof}
The convergence in (\ref{vel conver}) follows directly from uniform estimates in $\eqref{est-U}_{1}$.

\medskip

 From \eqref{pressure est}, we can obtain for $1<r\leq2$,
\begin{equation*}
  \|\tilde{P_\varepsilon}\|_{L^2((0,T)\times\O)}\leq C\|\nabla\tilde{P_\varepsilon}\|_{L^2(0,T;W^{-1,2}(\O))}\leq C,
\end{equation*}
and for $r>2$,
\begin{equation*}
  \|\tilde{P_\varepsilon}\|_{L^{\frac{r}{r-1}}((0,T)\times\O)}\leq C\|\nabla\tilde{P_\varepsilon}\|_{L^{\frac{r}{r-1}}(0,T;W^{-1,{\frac{r}{r-1}}}(\O))}\leq C.
\end{equation*}
Thus we can find
\begin{equation*}
P\in
\begin{cases}
L^2((0,T)\times\Omega) & 1<r\leq2,\\
L^{\frac{r}{r-1}}((0,T)\times\Omega) & r>2,
\end{cases}
\end{equation*}
such that (\ref{p con}) holds.

\medskip

Next we will use the cell problem to construct test functions. Clearly $w^{i,\varepsilon}$ defined in (\ref{2.12.5}) vanishes on the holes in $\Omega_\varepsilon$. Given any scalar function $\phi\in C_c^\infty((0,T)\times\Omega)$, taking $\phi w^{i,\varepsilon}$ as a test function to (\ref{1.11.1})  implies
\begin{align}\label{6.5}
\nonumber
&\int_0^T\int_{\Omega} \tilde{P_\varepsilon}\,{\rm div}\,(\phi w^{i,\varepsilon})\,{\rm d}x{\rm d}t-\int_0^T\int_{\Omega}\big(\frac{\eta_\infty}{2}\nabla\tilde{ U}_\varepsilon+(\eta_0-\eta_\infty) H_\varepsilon\big):\nabla (\phi w^{i,\varepsilon})\,{\rm d}x{\rm d}t\\
&=\int_0^T\int_{\Omega}\big(-F+\varepsilon^2 (\tilde{\mathbf{u}}_\varepsilon-\mathbf{u}_0)+G_\varepsilon
  \big)\cdot\phi w^{i,\varepsilon} \,{\rm d}x{\rm d}t.
\end{align}

Then we will pass $\varepsilon\rightarrow0$ term by term where the limits are taken up to subsequences. It follows from \eqref{w^i con} that
\begin{equation}\label{6.6}
  \lim_{\varepsilon\rightarrow0}\int_0^T\int_{\Omega}{ F}\cdot w^{i,\varepsilon} \phi\, {\rm d}x{\rm d}t=\int_0^T\int_{\Omega}{ F}\cdot \bar{w}^i \phi\, {\rm d}x{\rm d}t.
\end{equation}

The estimates of $\uu_{\e}$ in \eqref{2.1.01} ensure
\begin{equation}\label{6.7}
\big|\int_0^T\int_\Omega \varepsilon^2(\tilde{\mathbf{u}}_\varepsilon-\mathbf{u}_0)\cdot\phi w^{i,\varepsilon} \,{\rm d}x{\rm d}t\big|\leq  \varepsilon^2 \|\tilde{\mathbf{u}}_\varepsilon-\mathbf{u}_0\|_{L^2((0,T)\times \Omega))}\|\phi w^{i,\varepsilon}\|_{L^2((0,T)\times \Omega)}\leq C \e^{2}\to 0.
\end{equation}

For the term related to the pressure, using the divergence free condition ${\rm div}\,w^{i,\varepsilon}=0$ implies
\ba\label{limit-P-1}
  & \int_0^T\int_\Omega\tilde{P_\varepsilon}\,{\rm div}\,(\phi w^{i,\varepsilon})\, {\rm d}x{\rm d}t=\int_0^T\int_\Omega \tilde{P_\varepsilon} w^{i,\varepsilon}\cdot\nabla\phi\,  {\rm d}x{\rm d}t\\
   &=\int_0^T\int_\Omega \tilde{P_\varepsilon}(w^{i,\varepsilon}-\bar{w}^i)\cdot\nabla\phi \,{\rm d}x{\rm d}t+\int_0^T\int_\Omega \tilde{P_\varepsilon}\bar{w}^i\cdot\nabla\phi \, {\rm d}x{\rm d}t.
\ea
By the fact that
\begin{equation*}
\tilde{P_\varepsilon}\rightarrow P \ \rm weakly  \ in
\begin{cases}
L^2((0,T)\times\Omega) & 1<r\leq2,\\
L^{\frac{r}{r-1}}((0,T)\times\Omega) & r>2,
\end{cases}
\end{equation*}
we have
\begin{equation}\label{2.71.1}
  \displaystyle\lim_{\varepsilon\rightarrow0}\int_0^T\int_\Omega \tilde{P}_{\varepsilon}\bar{w}^i \cdot\nabla\phi\,{\rm d}x{\rm d}t =\int_0^T\int_\Omega P\bar{w}^i\cdot\nabla\phi\, {\rm d}x{\rm d}t=\int_0^T\int_\Omega P\,{\rm div}\,(\bar{w}^i \phi)\, {\rm d}x{\rm d}t.
\end{equation}

For each fixed $t\in(0,T)$, by the divergence free condition ${\rm div}\,(w^{i,\varepsilon}-\bar{w}^i)=0$, we have $(w^{i,\varepsilon}-\bar{w}^i)\cdot\nabla\phi \in L_0^{r+2}(\Omega)$. By employing the classical Bogovskii operator $\mathcal{B}$ in domain $\Omega$, we can find $\psi_\varepsilon=\mathcal{B}\big((w^{i,\varepsilon}-\bar{w}^i)\cdot\nabla\phi\big)$ such that
\begin{equation}\label{2.10.1}
 {\rm div}\,\psi_\varepsilon=(w^{i,\varepsilon}-\bar{w}^i)\cdot\nabla\phi, \quad \mbox{for each $t\in(0,T)$.}
\end{equation}
Moreover, we have the following estimate:
\begin{equation}\label{2.102}
  \|\psi_\varepsilon\|_{L^{\infty}(0,T;W_0^{1, r+2 }(\Omega))}\leq C\|(w^{i,\varepsilon}-\bar{w}^i)\cdot\nabla\phi\|_{L^{\infty}(0,T; L^{ r+2 }(\Omega))}\leq C.
\end{equation}
By the fact that $\partial_t\psi_\varepsilon=\partial_t\mathcal{B}\big((w^{i,\varepsilon}-\bar{w}^i)\cdot\nabla\phi\big)=\mathcal{B}\big((w^{i,\varepsilon}-\bar{w}^i)\cdot\partial_t\nabla\phi\big)$
we obtain
\begin{equation*}
  \|\partial_t\psi_\varepsilon\|_{L^{\infty}(0,T;W_0^{1, r+2 }(\Omega))}\leq C\|(w^{i,\varepsilon}-\bar{w}^i)\cdot\partial_t\nabla\phi\|_{L^{\infty}(0,T; L^{ r+2 }(\Omega))}\leq C.
\end{equation*}
By compact Sobolev embedding, we have, up to a subsequence that
\begin{equation}\label{psi str}
  \psi_\varepsilon\rightarrow \psi\quad {\rm strongly \ in \ }L^{ r+2 }((0,T)\times\Omega)
\end{equation}
for some $\psi\in W^{1, r+2 }(0,T;W_0^{1, r+2 }(\Omega))$.
Recall that $w^{i,\varepsilon} \to \bar{w}^i$ weakly in $L^{ r+2 }((0,T)\times\Omega)$. Then
\begin{equation}\label{2.10.5}
  (w^{i,\varepsilon}-\bar{w}^i)\cdot\nabla\phi\rightarrow0\ {\rm weakly \ in} \ L^{ r+2 }((0,T)\times\Omega).
\end{equation}
By (\ref{2.10.1}), (\ref{psi str}) and (\ref{2.10.5}), we can deduce that ${\rm div}\,\psi=0$. Then using (\ref{pressure est}) implies
\ba
\big|\int_0^T\int_\Omega \tilde{P_\varepsilon}(w^{i,\varepsilon}-\bar{w}^i)\cdot\nabla\phi \,{\rm d}x{\rm d}t\big|&=\big|\int_0^T\int_\Omega \tilde{P_\varepsilon}\,{\rm div}\,\psi_\varepsilon\,{\rm d}x{\rm d}t\big|\\
&=\big|\int_0^T\int_\Omega \tilde{P_\varepsilon}\,{\rm div}\,(\psi_\varepsilon-\psi)\,{\rm d}x{\rm d}t\big|\\
&=\big|\langle\nabla \tilde{P_\varepsilon},\psi_\varepsilon-\psi\rangle_{(0,T)\times\Omega}\big|\\
&\leq C
\big(\|\psi_\varepsilon-\psi\|_{L^{ r+2 }((0,T)\times\Omega)}+\e\| \nabla (\psi_\varepsilon-\psi)\|_{L^{ r+2 }((0,T)\times\Omega)}\big).
\nn\ea
Then, together with (\ref{2.102}) and (\ref{psi str}) we finally deduce
\begin{equation}\label{2.82}
  \int_0^T\int_\Omega \tilde{P_\varepsilon}(w^{i,\varepsilon}-\bar{w}^i)\cdot\nabla\phi \,{\rm d}x{\rm d}t\to 0,
\end{equation}
and consequently, by  (\ref{2.71.1}) and (\ref{2.82}), passing $\e\to 0$ in $\eqref{limit-P-1}$ gives
\begin{equation}\label{2.8}
  \displaystyle\lim_{\varepsilon\rightarrow0}\int_0^T\int_\Omega\tilde{P_\varepsilon}\,{\rm div}\,(\phi w^{i,\varepsilon})\, {\rm d}x{\rm d}t=\int_0^T\int_\Omega P\,{\rm div}\,(\phi \bar{w}^i) \,{\rm d}x{\rm d}t.
\end{equation}

By $\eqref{est-U}_{3}$, we have
\begin{equation}\label{2.10}
\big|\int_0^T\int_\Omega G_\varepsilon\cdot\phi w^{i,\varepsilon} \,{\rm d}x{\rm d}t\big|\leq  \|G_\varepsilon\|_{L^{\frac{3}{2}}((0,T)\times\Omega)}\|\phi\|_{L^\infty((0,T)\times\Omega)}\|w^{i,\varepsilon}\|_{L^{\infty}(\Omega)} \leq C \e^{2} \to 0.
\end{equation}

Next we calculate
$$\displaystyle\lim_{\varepsilon\rightarrow0}\int_0^T\int_\Omega\nabla\tilde{U}_\varepsilon:\nabla(\phi w^{i,\varepsilon}) \,{\rm d}x{\rm d}t$$
which equals to
$$\lim_{\varepsilon\rightarrow0}\int_0^T\int_\Omega\nabla\tilde{U}_\varepsilon:(\nabla\phi\otimes w^{i,\varepsilon})\,{\rm d}x{\rm d}t+\lim_{\varepsilon\rightarrow0}\int_0^T\int_\Omega\nabla\tilde{U}_\varepsilon:\nabla w^{i,\varepsilon}\phi \,{\rm d}x{\rm d}t.$$
By $\eqref{est-U}_{1}$ we obtain
\begin{equation}\label{u lim 1}
  \big | \int_0^T\int_\Omega\nabla\tilde{U}_\varepsilon:\nabla\phi\otimes w^{i,\varepsilon}\, {\rm d}x{\rm d}t \big|\leq C \|w^{i,\varepsilon}\|_{L^2(\Omega)}\|\nabla\tilde{U}_\varepsilon\|_{L^2((0,T)\times\Omega)} \leq C \e \to 0.
\end{equation}
Taking $\phi \tilde{U}_\varepsilon$ as a test function to (\ref{2.12.5}) gives
$$\int_0^T\int_\Omega\varepsilon\pi^{i,\varepsilon}\nabla\phi\cdot \tilde{U}_\varepsilon-\varepsilon^2\nabla w^{i,\varepsilon}:\nabla(\phi \tilde{U}_\varepsilon)+e^i\phi \tilde{U}_\varepsilon\, {\rm d}x{\rm d}t=0.$$
It is equivalent to
\begin{equation}\label{2.19}
  \int_0^T\int_\Omega\varepsilon^{-1}\pi^{i,\varepsilon}\nabla\phi\cdot \tilde{U}_\varepsilon-\nabla w^{i,\varepsilon}:\nabla\tilde{U}_\varepsilon\phi-\nabla w^{i,\varepsilon}:(\nabla\phi\otimes \tilde{U}_\varepsilon)+\varepsilon^{-2}\phi \tilde{U}_\varepsilon\cdot e^i\, {\rm d}x{\rm d}t=0.
\end{equation}
 By $\eqref{est-U}_{1}$, (\ref{2.13}), we have
\ba
\big|\int_0^T\int_\Omega\varepsilon^{-1}\pi^{i,\varepsilon}\nabla\phi\cdot \tilde{U}_\varepsilon \,{\rm d}x{\rm d}t\big|\leq \varepsilon^{-1}\|\tilde{U}_\varepsilon\|_{L^2((0,T)\times\Omega)}\|\pi^{i,\varepsilon}\|_{L^2(\Omega)}\|\nabla\phi\|_{L^{\infty}((0,T)\times\O)}
\leq C\e\to0,
\nn\ea
\ba
\big|\int_0^T\int_\Omega \nabla w^{i,\varepsilon}:(\nabla\phi\otimes \tilde{U}_\varepsilon)\,{\rm d}x{\rm d}t\big|  \leq   \|\nabla w^{i,\varepsilon}\|_{L^2(\Omega)}\|\tilde{U}_\varepsilon\|_{L^2((0,T)\times\Omega)}\|\nabla\phi\|_{L^{\infty}((0,T)\times\O)}
 \leq C \e\to 0.
\nn\ea
Then, passing $\varepsilon\rightarrow0$ in (\ref{2.19}) gives
\begin{equation}\label{u lim 2}
  \displaystyle\lim_{\varepsilon\rightarrow0}\int_0^T\int_\Omega \nabla w^{i,\varepsilon}:\nabla\tilde{U}_\varepsilon\phi \,{\rm d}x{\rm d}t=\lim_{\varepsilon\rightarrow0}\int_0^T\int_\Omega \varepsilon^{-2}\phi \tilde{U}_\varepsilon\cdot e^i\, {\rm d}x{\rm d}t=\int_0^T\int_\Omega\phi U_i\,{\rm d}x{\rm d}t.
\end{equation}
Thus by (\ref{u lim 1}) and (\ref{u lim 2}) we have
\begin{equation}\label{6.9}
 \displaystyle\lim_{\varepsilon\rightarrow0}\int_0^T\int_\Omega\nabla{\tilde{U}_\varepsilon}:\nabla(\phi w^{i,\varepsilon}) \,{\rm d}x{\rm d}t=\int_0^T\int_\Omega\phi U_i\,{\rm d}x{\rm d}t.
\end{equation}

\medskip

For the last term related to the nonlinear stress tensor $H_{\e}$,  due to the smallness of $D\tilde{\mathbf{u}}_\varepsilon$, we shall show its contribution in the limit is nothing but a Newtonian stress tensor.  Introduce the decomposition
\ba\label{limit-dec-He}
H_{\e}&=\int_0^t(1+\lambda| D{\mathbf{u}}_\varepsilon|^2)^{\frac{r}{2}-1}D{\mathbf{u}}_\varepsilon\,{\rm d}s= \int_0^t\big((1+ \lambda| D{\mathbf{u}}_\varepsilon|^2)^{\frac{r}{2}-1} - 1\big) D {\mathbf{u}}_\varepsilon \,{\rm d}s+ \int_0^tD {\mathbf{u}}_\varepsilon\,{\rm d}s\\
&=\int_0^t\big((1+ \lambda| D{\mathbf{u}}_\varepsilon|^2)^{\frac{r}{2}-1} - 1\big) D {\mathbf{u}}_\varepsilon \,{\rm d}s+ D {U}_\varepsilon.
\nn\ea
Then
\ba\label{H dec}
&\int_0^T\int_\Omega H_\varepsilon:\nabla(\phi w^{i,\varepsilon})\, {\rm d}x{\rm d}t\\
&=\int_0^T\int_\Omega D\tilde{U}_\varepsilon:\nabla (\phi w^{i,\varepsilon})\, {\rm d}x{\rm d}t+\int_0^T\int_\Omega\left(\int_0^t\big((1+\lambda| D\tilde{\mathbf{u}}_\varepsilon|^2)^{\frac{r}{2}-1}-1\big)D\tilde{\mathbf{u}}_\varepsilon {\rm d}s\right):\nabla (\phi w^{i,\varepsilon})\,{\rm d}x{\rm d}t.
\ea
Using the divergence free condition ${\rm div}\,\tilde{U}_\varepsilon=0$ and (\ref{6.9}) implies
\begin{equation}\label{U limit}
 \lim_{\e\to0}\int_0^T\int_\Omega D\tilde{U}_\varepsilon:\nabla (\phi w^{i,\varepsilon})\, {\rm d}x{\rm d}t=\lim_{\e\to0} \frac{1}{2} \int_0^T\int_\Omega \nabla \tilde{U}_\varepsilon:\nabla (\phi w^{i,\varepsilon})\, {\rm d}x{\rm d}t=\frac{1}{2}\int_0^T\int_\Omega\phi U_i\,{\rm d}x{\rm d}t.
\end{equation}

\medskip

For the other term on the right-hand side of \eqref{H dec}, based on different values of $r$, we use different inequalities to show that its limit  is actually zero.

\medskip

For $1<r<2$, by inequality $0\leq(1+s)^\alpha-s^\alpha\leq1\,(0\leq\alpha\leq1,  \ s \geq0)$, we have
$$\big|(1+\lambda| D\tilde{\mathbf{u}}_\varepsilon|^2)^{\frac{r}{2}-1}-1\big|=\big|(1+\lambda| D\tilde{\mathbf{u}}_\varepsilon|^2)^{\frac{r}{2}-1}\big(1-(1+\lambda| D\tilde{\mathbf{u}}_\varepsilon|^2)^{1-\frac{r}{2}}\big)\big|\leq C|D\tilde{\mathbf{u}}_\varepsilon|^{2-r}.$$
Then, for $1<r<2$, there holds
 \begin{equation*}
\begin{split}
& \big|\int_0^T\int_\Omega\left(\int_0^t\big((1+\lambda| D\tilde{\mathbf{u}}_\varepsilon|^2)^{\frac{r}{2}-1}-1\big)D\tilde{\mathbf{u}}_\varepsilon \,{\rm d}s\right):\nabla w^{i,\varepsilon}\phi\,  {\rm d}x{\rm d}t\,\big|\\
  &\leq C \| \nabla w^{i,\varepsilon} \|_{L^{\infty}(\O)}\int_0^T{\rm d}t\int_0^t{\rm d}s\int_\Omega\mid(1+\lambda| D\tilde{\mathbf{u}}_\varepsilon|^2)^{\frac{r}{2}-1}-1\mid| D\tilde{\mathbf{u}}_\varepsilon|\,{\rm d}x \\
  &\leq C\varepsilon^{-1}\int_0^T{\rm d}t\int_0^t{\rm d}s\int_\Omega| D\tilde{\mathbf{u}}_\varepsilon|^{3-r}{\rm d}x \\
  & \leq C\varepsilon^{-1}\|D\tilde{\mathbf{u}}_\varepsilon\|_{L^{3-r}((0,T)\times\Omega)}^{3-r} \leq C\varepsilon^{-1}\|D\tilde{\mathbf{u}}_\varepsilon\|_{L^{2}((0,T)\times\Omega)}^{3-r}\leq C\varepsilon^{2-r} \to 0.
\end{split}
\end{equation*}

\medskip

 For $2<r\leq4$, again by inequality $0\leq(1+s)^\alpha-s^\alpha\leq1\,(0\leq\alpha\leq1, \ s\geq0)$, we have
$$\big|(1+\lambda| D\tilde{\mathbf{u}}_\varepsilon|^2)^{\frac{r}{2}-1}-1\big|\leq
C|D\tilde{\mathbf{u}}_\varepsilon|^{r-2}.$$
Then using the estimate $\|\nabla \tilde{\mathbf{u}}_\varepsilon\|_{L^{r}((0,T)\times\Omega)}\leq C\e^{\frac 2r}$  in $\eqref{2.1.01}_{2}$ gives
\ba
& \big|\int_0^T\int_\Omega\left(\int_0^t\big((1+\lambda| D\tilde{\mathbf{u}}_\varepsilon|^2)^{\frac{r}{2}-1}-1\big)D\tilde{\mathbf{u}}_\varepsilon \,{\rm
d}s\right):\nabla w^{i,\varepsilon}\phi\,  {\rm d}x{\rm d}t\,\big|\\
  &\leq C\| \nabla w^{i,\varepsilon} \|_{L^{\infty}(\O)} \int_0^T{\rm d}t\int_0^t{\rm d}s\int_\Omega\mid(1+\lambda| D\tilde{\mathbf{u}}_\varepsilon|^2)^{\frac{r}{2}-1}-1\mid|
  D\tilde{\mathbf{u}}_\varepsilon|\,{\rm d}x \\
  &\leq C\varepsilon^{-1}\int_0^T{\rm d}t\int_0^t{\rm d}s\int_\Omega| D\tilde{\mathbf{u}}_\varepsilon|^{r-1}{\rm d}x\\
  &\leq C\varepsilon^{-1}\|D\tilde{\mathbf{u}}_\varepsilon\|_{L^{r-1}((0,T)\times\Omega)}^{r-1} \leq C \e^{-1} \e^{\frac{2(r-1)}{r}} = C \e^{\frac{r-2}{r}} \to 0.
\nn\ea

\medskip

For $r\geq 4$,
\ba
\big|(1+\lambda| D\tilde{\mathbf{u}}_\varepsilon|^2)^{\frac{r}{2}-1}-1\big| \leq\lambda(\frac{r}{2}-1)(1+\lambda| D\tilde{\mathbf{u}}_\varepsilon|^2)^{\frac{r}{2}-2}|D\tilde{\mathbf{u}}_\varepsilon|^{2} \leq C(|D\tilde{\mathbf{u}}_\varepsilon|^{2}+|D\tilde{\mathbf{u}}_\varepsilon|^{r-2}).
\nn\ea
Using the estimate $\|\nabla \tilde{\mathbf{u}}_\varepsilon\|_{L^{2}((0,T)\times\Omega)}^{2} + \|\nabla \tilde{\mathbf{u}}_\varepsilon\|_{L^{r}((0,T)\times\Omega)}^{r}\leq C\e^2$ and H\"older's inequality implies
\ba
\|\nabla \tilde{\mathbf{u}}_\varepsilon\|_{L^{q}((0,T)\times\Omega)}^{q}  \leq C \e^{2}, \quad \forall \, q\in [2,r].
\nn\ea
Thus,
\ba
& \big|\int_0^T\int_\Omega\left(\int_0^t\big((1+\lambda| D\tilde{\mathbf{u}}_\varepsilon|^2)^{\frac{r}{2}-1}-1\big)D\tilde{\mathbf{u}}_\varepsilon \,{\rm
d}s\right):\nabla w^{i,\varepsilon}\phi\,  {\rm d}x{\rm d}t\,\big|\\
  &\leq C \| \nabla w^{i,\varepsilon} \|_{L^{\infty}(\O)}  \int_0^T{\rm d}t\int_0^t{\rm d}s\int_\Omega\mid(1+\lambda| D\tilde{\mathbf{u}}_\varepsilon|^2)^{\frac{r}{2}-1}-1\mid|
  D\tilde{\mathbf{u}}_\varepsilon|\,{\rm d}x \\
  &\leq C\varepsilon^{-1}\int_0^T{\rm d}t\int_0^t{\rm d}s\int_\Omega \big( | D\tilde{\mathbf{u}}_\varepsilon|^{r-1}+| D\tilde{\mathbf{u}}_\varepsilon|^{3} \big){\rm d}x\\
  &\leq C\varepsilon^{-1}(\|D\tilde{\mathbf{u}}_\varepsilon\|_{L^{r-1}((0,T)\times\Omega)}^{r-1}+\|D\tilde{\mathbf{u}}_\varepsilon\|_{L^{3}((0,T)\times\Omega)}^{3})\leq C \e \to 0.
  \nn\ea

  \medskip

To sum up, for $1<r<\infty$,
\begin{equation}\label{str con 1}
  \int_0^T\int_\Omega\left(\int_0^t\big((1+\lambda| D\tilde{\mathbf{u}}_\varepsilon|^2)^{\frac{r}{2}-1}-1\big)D\tilde{\mathbf{u}}_\varepsilon \,{\rm d}s\right):\nabla w^{i,\varepsilon}\phi\, {\rm d}x{\rm d}t\to0.
\end{equation}
We can use same arguments to show that
\begin{equation}\label{str con 2}
 \left | \int_0^T\int_\Omega\left(\int_0^t\big((1+\lambda| D\tilde{\mathbf{u}}_\varepsilon|^2)^{\frac{r}{2}-1}-1\big)D\tilde{\mathbf{u}}_\varepsilon \,{\rm d}s\right):(w^{i,\varepsilon}\otimes\nabla\phi)\, {\rm d}x{\rm d}t \right|\leq C \e \to 0.
\end{equation}
Thus by (\ref{str con 1}) and (\ref{str con 2}), we have
\begin{equation}\label{3.28.1}
  \int_0^T\int_\Omega\left(\int_0^t\big((1+\lambda| D\tilde{\mathbf{u}}_\varepsilon|^2)^{\frac{r}{2}-1}-1\big)D\tilde{\mathbf{u}}_\varepsilon {\rm d}s\right):\nabla (\phi w^{i,\varepsilon})\,{\rm d}x{\rm d}t\to0.
\end{equation}
Then using (\ref{U limit}), (\ref{3.28.1}) and passing $\varepsilon\rightarrow0$ in (\ref{H dec}) implies
\begin{equation}\label{2.11}
  \displaystyle\lim_{\varepsilon\rightarrow0}\int_0^T\int_\Omega H_\varepsilon:\nabla(\phi w^{i,\varepsilon})\, {\rm d}x{\rm d}t=\frac{1}{2}\int_0^T\int_\Omega\phi U_i\,{\rm d}x{\rm d}t.
\end{equation}

Finally using (\ref{6.6}), (\ref{6.7}), (\ref{2.8}), (\ref{2.10}), (\ref{6.9}), (\ref{2.11}) and passing $\varepsilon\rightarrow0$ in (\ref{6.5}) gives
$$\int_0^T\int_\Omega \frac{1}{2}\eta_0\phi U_i\,{\rm d}x{\rm d}t=\int_0^T\int_\Omega F\cdot\phi \bar{w}^i\,{\rm d}x{\rm d}t+\int_0^T\int_\Omega P\,{\rm div}\,(\phi \bar{w}^i)\,{\rm d}x{\rm d}t.$$
This is nothing but the Darcy's law \eqref{Darcy-U} in the sense of distribution, with permeability tensor $A=(\bar{w}_j^i)$ determined in (\ref{1.21.0}). Thus, we completed the proof of Proposition \ref{pro2.2}.
\end{proof}
\end{proposition}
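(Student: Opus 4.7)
The plan is to first extract weakly convergent subsequences from the uniform bounds already established, and then identify the limit equation by testing the time-integrated system \eqref{1.11.1} against functions built from the cell problem solutions $w^{i,\varepsilon}$. Weak convergence of $\varepsilon^{-2}\tilde U_\varepsilon$ in $L^2((0,T)\times\Omega)$ follows from the bound $\|U_\varepsilon\|_{W^{1,2}(0,T;L^2(\Omega_\varepsilon))}\leq C\varepsilon^2$ in Corollary~\ref{cor-est-U} applied to the zero extension. For the pressure, the dual estimate \eqref{pressure est} controls $\nabla \tilde P_\varepsilon$ in $L^{q'}(0,T;W^{-1,q'}(\Omega))$ with $q'=2$ or $r/(r-1)$; combining with a Ne\v{c}as-type inequality $\|g\|_{L^{q'}/\RR}\leq C\|\nabla g\|_{W^{-1,q'}}$ on the Lipschitz domain $\Omega$ and normalising by spatial means then yields weak convergence $\tilde P_\varepsilon\rightharpoonup P$ in the claimed space.

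Next, given $\phi\in C_c^\infty((0,T)\times\Omega)$, I would take $\phi\, w^{i,\varepsilon}$ as a test function in \eqref{1.11.1}, which is admissible since $w^{i,\varepsilon}=0$ on the holes. Several terms pass to their limits routinely using Proposition~\ref{pro2.1}, Corollary~\ref{cor-est-U} and the weak convergence $w^{i,\varepsilon}\rightharpoonup\bar w^i$: the forcing yields $\int F\cdot\phi\,\bar w^i$, while the $\varepsilon^2(\tilde{\mathbf u}_\varepsilon-\mathbf u_0)$ term and the convective contribution $\int G_\varepsilon\cdot \phi w^{i,\varepsilon}$ are both $O(\varepsilon^2)$ and drop out.

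The pressure term $\int \tilde P_\varepsilon\, w^{i,\varepsilon}\cdot\nabla\phi$ is the first subtle point since both factors converge only weakly. I split $w^{i,\varepsilon}=\bar w^i+(w^{i,\varepsilon}-\bar w^i)$: the first piece pairs with $\tilde P_\varepsilon$ via weak convergence to give $\int P\,\dive(\phi\,\bar w^i)$. For the oscillating piece I apply the Bogovskii operator in $\Omega$ to write $(w^{i,\varepsilon}-\bar w^i)\cdot\nabla\phi=\dive\psi_\varepsilon$, with $\psi_\varepsilon$ and $\partial_t\psi_\varepsilon$ uniformly bounded in $L^\infty(0,T;W_0^{1,q})$ for some large $q$; Aubin-Lions gives strong compactness in $L^q((0,T)\times\Omega)$, the limit $\psi$ is divergence-free, and \eqref{pressure est} applied to $\psi_\varepsilon-\psi$ forces this contribution to vanish.

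The main obstacle is the nonlinear viscous term $\int H_\varepsilon:\nabla(\phi\,w^{i,\varepsilon})$, since $\|\nabla w^{i,\varepsilon}\|_{L^\infty}=O(\varepsilon^{-1})$ threatens to blow up. My strategy is the decomposition $H_\varepsilon=D U_\varepsilon+\int_0^t\bigl((1+\lambda|D\mathbf u_\varepsilon|^2)^{r/2-1}-1\bigr)D\mathbf u_\varepsilon\,ds$. The leading Newtonian piece $DU_\varepsilon$ is dealt with by a reverse test: taking $\phi\tilde U_\varepsilon$ as test function in the rescaled cell problem \eqref{2.12.5} and integrating by parts, the $\varepsilon^{-2}$ rescaling exactly delivers $\lim\int \nabla w^{i,\varepsilon}:\nabla\tilde U_\varepsilon\,\phi=\int \phi\,U_i\,dx\,dt$, so combining with the $\tfrac{\eta_\infty}{2}$ Laplacian contribution from \eqref{1.11.1} and the factor $(\eta_0-\eta_\infty)$ in front of $H_\varepsilon$ produces exactly the coefficient $\tfrac12\eta_0$ in \eqref{Darcy-U}, with permeability $A_{ij}=\bar w^i_j$. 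For the nonlinear excess, I plan a case analysis on $r$ using the elementary bounds derived from $0\leq (1+s)^\alpha-s^\alpha\leq 1$ for $0\leq\alpha\leq 1$: for $1<r<2$ one obtains $|(1+\lambda|D\mathbf u_\varepsilon|^2)^{r/2-1}-1|\leq C|D\mathbf u_\varepsilon|^{2-r}$, for $2<r\leq 4$ the dual inequality gives $C|D\mathbf u_\varepsilon|^{r-2}$, and for $r>4$ a mean-value argument gives $C(|D\mathbf u_\varepsilon|^2+|D\mathbf u_\varepsilon|^{r-2})$. In each regime the $\varepsilon^{-1}$ loss from $\nabla w^{i,\varepsilon}$ must be beaten by positive powers of $\varepsilon$ drawn from $\|D\mathbf u_\varepsilon\|_{L^2}\lesssim\varepsilon$ and $\|D\mathbf u_\varepsilon\|_{L^r}\lesssim\varepsilon^{2/r}$; assembling the limits yields \eqref{Darcy-U} in $\mathcal{D}'((0,T)\times\Omega)$.
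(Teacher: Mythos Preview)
Your proposal is correct and follows essentially the same route as the paper: the same extraction of weak limits via Corollary~\ref{cor-est-U} and \eqref{pressure est}, the same Bogovskii/Aubin--Lions device for the oscillatory pressure piece, the same reverse-test use of the cell problem \eqref{2.12.5} against $\phi\tilde U_\varepsilon$, and the same three-regime case analysis ($1<r<2$, $2<r\le4$, $r\ge4$) for the nonlinear excess $H_\varepsilon-DU_\varepsilon$. The only cosmetic difference is that the paper splits at $r\ge4$ rather than $r>4$, which is immaterial.
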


\subsection{End of the proof}
From Proposition \ref{pro2.2}, we can deduce the limit equation in ${\mathbf u}$. Set $\tilde{p}_\varepsilon=\partial_t\tilde{P}_\varepsilon $ and  $p=\partial_tP$ in the weak sense.  By (\ref{p def}) and (\ref{p con}) we have
\begin{equation*}
p\in
\begin{cases}
W^{-1,2}(0,T;L^2(\Omega)) & 1<r\leq2,\\
W^{-1,\frac{r}{r-1}}(0,T;L^{\frac{r}{r-1}}(\Omega))& r>2,
\end{cases}
\end{equation*}
and
\begin{equation*}
\tilde{p}_\varepsilon=\partial_t\tilde{P}_\varepsilon\rightarrow p\ {\rm weakly  \ in}
\begin{cases}
W^{-1,2}(0,T;L^2(\Omega)) & 1<r\leq2,\\
W^{-1,\frac{r}{r-1}}(0,T;L^{\frac{r}{r-1}}(\Omega))& r>2.
\end{cases}
\end{equation*}
Thus, differentiating (\ref{Darcy-U}) with respect to time variable in the distribution sense gives the limit equation \eqref{1.10.1}. The proof of Theorem \ref{thm-1} is completed.

\section*{Acknowledgements}
Both authors are partially supported by the NSF of China under Grant 12171235.



\begin{thebibliography}{000}
\bibitem{ALLaire1} G. Allaire. \newblock Homogenization of the Navier-Stokes equations in open sets perforated with tiny holes. I. Abstract framework, a volume
distribution of holes. \newblock {\em Arch. Ration. Mech. Anal.}  113 (3) (1990), 209-259.

\bibitem{ALLaire2} G. Allaire. \newblock Homogenization of the Navier-Stokes equations in open sets perforated with tiny holes. II. Noncritical sizes of the holes for a volume distribution and a surface distribution of holes. \newblock {\em Arch. Ration. Mech. Anal.}  113 (3) (1990), 261-298.

\bibitem{malek}  M. Bul{\'i}{\v c}ek, P. Gwiazda, J. M\'alek, A. \'Swierczewska-Gwiazda. On unsteady flows of implicitly constituted incompressible fluids. \newblock {\em SIAM: J. Math. Anal.} 44 (4) (2012), 2756-2801.

\bibitem{FL1}A. Bourgeat, A. Mikeli\'{c}.\newblock Homogenization of a polymer flow through a porous medium. \newblock {\em Nonlinear Analysis. Theory. Methods.}  26 (7) (1996), 1221-1253.


\bibitem{bella} P. Bella, F. Oschmann. Homogenization and low Mach number limit of compressible Navier-Stokes equations in critically
perforated domains. \newblock{\em J. Math. Fluid Mech.} 24 (3) (2022), 79.


\bibitem{oschmann} P. Bella, F. Oschmann. Inverse of divergence and homogenization of compressible Navier-Stokes equations in randomly perforated domains. \newblock {\em Arch. Ration. Mech. Anal.} 247 (2) (2023), 14.

\bibitem{feireisl} P. Bella, E. Feireisl, F. Oschmann. $\Gamma $-convergence for nearly incompressible fluids. arXiv preprint arXiv:2212.06729, 2022.



 \bibitem{ALL-NS1} L. Diening, E. Feireisl, Y. Lu. The inverse of the divergence operator on perforated domains with applications to homogenization problems for the compressible Navier-Stokes system. \newblock {\em ESAIM Control Optim. Calc. Var.} 23 (3) (2017), 851-868.

\bibitem{9} E. Feireisl, A. Novotn\'{y}. Singular Limits in Thermodynamics of Viscous Fluids. Springer Science and Business Media, 2009.


\bibitem{FeNaNe} E.~Feireisl, Y.~Namlyeyeva, {\v S.}~Ne{\v c}asov{\' a}.
 Homogenization of the evolutionary {N}avier--{S}tokes system.
\newblock {\em Manusc. Math.}  149 (2016), 251-274.

\bibitem{ALL-NS6} E. Feireisl, Y. Lu. \newblock Homogenization of stationary Navier-Stokes equations in domains with tiny holes. \newblock {\em J. Math. Fluid Mech.} 17 (2) (2015), 381-392.

\bibitem{takahash} E. Feireisl, A. Novotn{\'y}, T. Takahashi. Homogenization and singular limits for the complete Navier-Stokes-Fourier system. \newblock {\em J. Math. Pures Appl}. 94 (1) (2010), 33-57.


\bibitem{Hfer}  R. M. H\"ofer, K. Kowalczyk, S. Schwarzacher. Darcy's law as low Mach and homogenization limit of a compressible fluid in perforated domains. \newblock {\em Math. Models Methods Appl. Sci.} 31 (9) (2021), 1787-1819.


 \bibitem{hornung} U. Hornung (Ed.). Homogenization and Porous Media. Interdisciplinary Applied Mthematics Series, vol. 6, Springer-Verlag, New York, 1997.

\bibitem{lad}  O. A. Ladyzhenskaya. The mathematical theory of viscous incompressible flow. Gordon and Beach, New York, 1969.


\bibitem{ALLN} Y. Lu, S. Schwarzacher. Homogenization of the compressible Navier-Stokes equations in domains with very tiny holes. \newblock {\em J. Differential Equations.} 265 (4) (2018), 1371-1406.

\bibitem{Y. Lu} Y. Lu. Uniform estimates for Stokes equations in a domain with a small hole and applications in homogenization problems. \newblock {\em ESAIM Control Optim. Calc. Var}. 60 (6) (2021), 1-31.

  \bibitem{Yong} Y. Lu. Homogenization of stokes equations in perforated domains: A unified approach. \newblock{\em J. Math. Fluid Mech.} 22 (3) (2020), 44.


\bibitem{luyong} Y. Lu, Peikang. Yang. Homogenization of evolutionary incompressible Navier-Stokes system in
perforated domains. \newblock{\em J. Math. Fluid Mech.} 25 (1) (2023), 4.


\bibitem{Mik} A. Mikeli\'{c}. Homogenization of nonstationary Navier-Stokes equations in a domain with a grained boundary.
  \newblock {\em Ann. Mat. Pura Appl.}  158 (1991), 167-179.


\bibitem{masmoudi} N. Masmoudi. Homogenization of the compressible Navier-Stokes equations in a porous medium. \newblock {\em ESAIM Control Optim. Calc. Var}. 8 (2002), 885-906.

\bibitem{pokorn} F. Oschmann, M. Pokorn{\' y}. Homogenization of the unsteady compressible Navier-Stokes equations for adiabatic exponent $\gamma> 3$. arXiv preprint arXiv:2302.13789 (2023).


\bibitem{neasov} F. Oschmann, {\v S}. Ne{\v c}asov{\'a}. Homogenization of the two-dimensional evolutionary compressible Navier-Stokes equations. \newblock {\em ESAIM Control Optim. Calc. Var.} 62 (6) (2023), 184.


\bibitem{tartar} L. Tartar. Incompressible fluid flow in a porous medium: convergence of the homogenization process. \newblock {\em Non-homogeneous Media and Vibration Theory.} (1980), 368-377.

\bibitem{teman} R. Temam. Navier-Stokes equations, North-Holland, Amsterdam, 1979.


\bibitem{wolf} J. Wolf. Existence of weak solutions to the equations of nonstationary motion of non-Newtonian fluids with shear-dependent viscosity. \newblock{\em J. Math. Fluid Mech.} 9 (2007), 104-138.




 \end{thebibliography}
\end{document}